\numberwithin{equation}{section} \hyphenation{semi-stable}
\newcommand{\CC}{\mathbb{C}}
\newcommand{\mA}{\mathbb{A}}
\newcommand{\NN}{\mathbb{N}}
\newcommand {\PP}{\mathbb{P}}
\newcommand {\sT}{\mathcal{T}}
\newcommand {\sO}{\mathcal{O}}
\newcommand {\sJ}{\mathcal{J}}
\newcommand {\sL}{\mathcal{L}}
\newcommand {\sC}{\mathcal{C}}
\newcommand{\hess}{\text {Hess}}
 \DeclareMathOperator{\Proj}{Proj}
 \def\cocoa{{\hbox{\rm C\kern-.13em
      o\kern-.07em C\kern-.13em o\kern-.15em A}}}
\newtheorem{theorem}{Theorem}[section]
\newtheorem{lemma}[theorem]{Lemma}
\newtheorem{proposition}[theorem]{Proposition}
 \theoremstyle{definition}
\newtheorem{definition}[theorem]{Definition} \theoremstyle{remark}
\newtheorem{remark}[theorem]{Remark}
\newtheorem{example}[theorem]{Example}
\newtheorem{thmx}{Theorem}
\definecolor{MyDarkGreen}{cmyk}{0.7,0,1,0}
\begin{document}

\title[Jacobian schemes of conic-line arrangements \\
and eigenschemes]
{Jacobian schemes of conic-line arrangements \\
and eigenschemes}
 \author[V. Beorchia]{Valentina Beorchia} 
 \address{Dipartimento di Matematica e Geoscienze, Universit\`a di
Trieste, Via Valerio 12/1, 34127 Trieste, Italy}
 \email{beorchia@units.it, 
 ORCID 0000-0003-3681-9045}.
 \author[R.\ M.\ Mir\'o-Roig]{Rosa M.\ Mir\'o-Roig} 
 \address{Facultat de
 Matem\`atiques i Inform\`atica, Universitat de Barcelona, Gran Via des les
 Corts Catalanes 585, 08007 Barcelona, Spain} \email{miro@ub.edu, ORCID 0000-0003-1375-6547}

\thanks{The first author is a member of GNSAGA of INdAM and is supported by the fund Universit\`a degli Studi di Trieste - FRA 2023.} 
\thanks{The second author has been partially supported by the grant PID2019-104844GB-I00}

\begin{abstract} The Jacobian scheme of a reduced, singular projective plane curve is the zero-dimensional scheme, whose homogeneous ideal is generated by the partials of its defining polynomial. The degree of such a scheme is called the global Tjurina number and, if the curve is not a set of concurrent lines, some upper and lower bounds depending on the degree of the curve and the minimal degree of a Jacobian syzygy, have been given by
A.A. du Plessis and C.T.C. Wall.

In this paper we give a complete 
geometric characterization of conic-line arrangenents, with global Tjurina number attaining the upper bound. Furthermore, we characterize conic-line arrangenents attaining the lower bound for the global Tjurina number, among all curves with a linear Jacobian syzygy.

As an application, we characterize conic-line arrangenents with Jacobian scheme equal to an eigenscheme of some ternary tensor, and we study the geometry of their polar maps.
\end{abstract}

\maketitle

\section{Introduction} 
The Jacobian scheme $\Sigma_f$ of a reduced, singular projective plane curve $C=V(f) \subseteq \PP^2$ is the zero-dimensional scheme, whose homogeneous ideal is generated by the partials of $f$. The degree of such a scheme is called the {\it global Tjurina number} $\tau(C)$ and it is equal to $(d-1)^2$, if $C$ consists of concurrent lines, while in all other cases, a theorem by
A.A. du Plessis and C.T.C. Wall 
in \cite{PW} determines the following bounds on $\tau(C)$ in terms of the minimal degree $r$ of a syzygy between the three partials:
$$
 (d-1)(d-r-1)\le \tau(C)\le (d-1)(d-r-1)+r^2.
 $$
 In particular, we have $\tau(C) \le (d-1)(d-2) +1$.

The scheme structure of Jacobian schemes is, in general, not completely understood, even in the case of irreducible curves. For instance, a class of curves attaining the bound above for any $r\ge 2$ is given by some rational cuspidal curves, as shown in \cite [Theorem 1.1] {DS4}. 
The set of curves attaining the maximal bound or one less seems to be very rich, and examples 
having high genus and many branches have been given in \cite[Theorem 3.9 and Theorem 3.11]{BGLM}. 
As far as we know, the only characterization result is given in the case of a linear Jacobian syzygy by 
A.A. du Plessis and C.T.C. Wall 
in \cite[Proposition 1.1]{PW2}, which states that $r=1$ and only if the curve admits a $1$-dimensional symmetry, i.e. the curve admits a $1$-dimensional algebraic subgroup of $PGL_2(\CC)$ as automorphism group.

In this paper we focus on the case of conic-line arrangements with a linear Jacobian syzygy. Some results concerning conic-line arrangements with only nodes, tacnodes, and ordinary triple points are given in \cite{DP}, but in degree $d \ge 5$ one has $r \ge 2$. The Jacobian syzygies and the global Tjurina number of conics arrangements of conics belonging to particular pencils can be found in
\cite{Pl}, \cite{Sh} and \cite{D2}.

The assumption $r=1$ implies
$(d-1)(d-2)\le \tau(C)\le (d-1)(d-2)+1$, and any curve attaining the upper bound is reducible by
\cite[Proposition 1.3]{PW2}. Concerning the lower bound, there are irreducible curves, with a linear syzygy, satisfying $\tau(C)=(d-1)(d-2)$, like for instance some Sebastiani - Thom rational cuspidal curves (see \cite[Proposition 2.11]{DS2}).

We give a complete 
geometric characterization of the conic-line arrangements attaining the two bounds for $r=1$.
Concerning the upper bound, examples are given by line arrangements consisting of the union of $d-1$ concurrent lines and one general line,
as described in \cite[Proposition 4.7(5)]{DIM}.
Our first main result is the following (see Theorem 
\ref{prop: maxtau}): 

\begin{thmx}
 Let $C=V(f)$ be a conic-line arrangement in $\PP^2$ of degree $d\ge 5$. Then, $\tau(C)=(d-1)(d-2)+1$ if and only if $C$ is either:
 \begin{itemize}
  \item $\sL$: a line arrangement with $d-1$ concurrent lines and a general line;
  \item $\sC_1$: a union of conics belonging to a hyperosculating pencil, that is with base locus supported in a point;
  \item $\sC \sL_1$: a union of conics belonging to a hyperosculating pencil and the tangent line in the hyperosculating point;
  \item $\sC \sL_2$: the union of conics belonging to a bitangent pencil and a tangent line in one of the bitangency points;
  \item $\sC \sL_3$: the union of conics belonging to a bitangent pencil and the two tangent lines in the bitangency points;
  \item $\sC \sL_4$: the union of conics belonging to a bitangent pencil, one tangent line in a tangency point, and the line connecting the two tangency points;
  \item $\sC \sL_5$: the union of conics belonging to a bitangent pencil, the two tangent lines in the bitangency points, and the line connecting the two tangency points.
  \end{itemize}
\end{thmx}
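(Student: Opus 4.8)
The plan is to translate the extremal Tjurina condition into a statement about freeness and then exploit the symmetry forced by a linear syzygy. Recall (see \cite{PW}) that equality in the upper bound $\tau(C)\le (d-1)(d-r-1)+r^2$ holds precisely when $C$ is free with exponents $(r,d-1-r)$, in which case $\tau(C)=(d-1)^2-r(d-1-r)$. Since for $d\ge 5$ the value $(d-1)(d-2)+1$ is the global maximum of the right-hand side over all admissible $r$, and is reached only at $r=1$, I would first record the equivalence
\[ \tau(C)=(d-1)(d-2)+1 \quad\Longleftrightarrow\quad C \text{ is free with exponents } (1,d-2). \]
In particular $C$ then carries a Jacobian syzygy $\rho=(a,b,c)$ of degree one, and by \cite[Proposition 1.1]{PW2} such a linear syzygy is equivalent to the existence of a one-dimensional algebraic subgroup $G\subset PGL_3(\CC)$ preserving $C$: the triple $\rho$ is the infinitesimal generator of $G$, that is, a linear vector field $\theta=a\partial_x+b\partial_y+c\partial_z$ with $\theta(f)\in(f)$.

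For the forward implication I would classify the subgroups $G$ up to conjugacy by the Jordan type of the matrix generating $\theta$: a torus fixing a point and a line pointwise (weights $(0,0,1)$, equivalently $(2,-1,-1)$), a torus with three distinct weights, the torus of weights $(1,-1,0)$, and the unipotent types with a single $2\times 2$ or a single $3\times 3$ Jordan block. For each normal form I would list the $G$-invariant lines and the $G$-invariant conics; the latter sweep out a pencil whose base locus is dictated by the Jordan type. The torus $(0,0,1)$ admits no smooth invariant conic and produces the pencil of lines through the fixed point together with one further invariant line; the regular $3\times 3$ unipotent block produces the hyperosculating pencil, with base locus a single length-four point and a unique invariant line, the tangent there; and the torus $(1,-1,0)$ produces the bitangent pencil, whose two length-two base points determine three invariant lines, the two tangents and the line joining the base points. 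The remaining types (three distinct weights, or the $2\times 2$ block) yield either no invariant conic in degree $\ge 5$ or a tangent-type pencil, and I would show they never reach the maximal Tjurina number. This already narrows every $G$-invariant arrangement down to members of one such pencil together with a choice of invariant lines, matching the seven configurations in the statement.

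The heart of the proof is then to decide, within each symmetry type, which of these candidate unions are actually free with exponents $(1,d-2)$ rather than realising the value $(d-1)(d-2)$ one unit smaller. I would carry this out by a local computation at the base points of the pencil, where every singularity concentrates. For a union of conics from the hyperosculating pencil the single base point is a quasi-homogeneous multibranch singularity with all branches of contact type $x=y^2+O(y^4)$, whose Tjurina number I would read off directly and compare with $(d-1)(d-2)+1$, thereby keeping $\sC_1$ and, after adjoining the tangent, $\sC\sL_1$. For the bitangent pencil I would track how the two tangential length-two singularities change as each of the three invariant lines is adjoined, and verify that precisely the four admissible combinations survive as $\sC\sL_2,\dots,\sC\sL_5$, while, for instance, the conics alone or the conics together with only the joining line fall short of the bound; the line arrangement $\sL$ arises identically from the torus $(0,0,1)$, its $(d-1)$-fold point contributing $(d-2)^2$ and the $d-1$ residual nodes contributing $d-1$, for a total of $(d-1)(d-2)+1$. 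I expect this bookkeeping of local Tjurina contributions, and in particular the determination of exactly which line-combinations in the bitangent case preserve freeness, to be the main obstacle.

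For the reverse implication I would verify directly that each listed family is free with exponents $(1,d-2)$. By Saito's criterion this amounts to producing, alongside the linear syzygy $\theta$ coming from $G$, a second syzygy of degree $d-2$ whose Saito determinant with the Euler derivation is a nonzero scalar multiple of $f$; equivalently, one sums the local Tjurina numbers already computed and checks that the total equals $(d-1)(d-2)+1$. The genuinely delicate verification is the hyperosculating case, where the high-order mutual tangency of all branches at the single base point makes the local Tjurina computation the most demanding point of the whole argument.
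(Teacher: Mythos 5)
Your strategy is genuinely different from the paper's: you reduce the extremal Tjurina condition to freeness with exponents $(1,d-2)$, invoke the du Plessis--Wall characterization \cite[Proposition 1.1]{PW2} of ${\rm mrd}(f)=1$ via a one-dimensional symmetry group, and then classify one-parameter subgroups of $PGL_3(\CC)$ by Jordan type together with their invariant conics and lines. The paper never uses the symmetry group: it applies the addition--deletion theorem of Dimca--Ilardi--Sticlaru (Theorem \ref{r_union}) to force every pair and triple of conic components into a common bitangent or hyperosculating pencil, pins down the admissible lines by the same theorem, and separates free from nearly free configurations via the Buchweitz--Conca normal form (Theorem \ref{Conca}). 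Your reduction to freeness is sound, your enumeration of invariant pencils and lines is correct (each component is separately invariant because the group is connected), and for every configuration supported on the bitangent pencil the local singularities really are quasi-homogeneous of weights $(2,1)$, so your Tjurina bookkeeping goes through there; the same is true for the line arrangement $\sL$.

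The gap is in the hyperosculating cases $\sC_1$ and $\sC\sL_1$, exactly where you propose to ``read off'' the local Tjurina number of what you call a quasi-homogeneous multibranch singularity. That singularity is \emph{not} quasi-homogeneous in the range of the theorem. For $\sC_1$ with $m\ge 3$ conics (degree $d=2m\ge 6$), the base point carries $m$ smooth branches with pairwise intersection multiplicity $4$, so $\mu_p=2\cdot 4\cdot \frac{m(m-1)}{2}-m+1=4m^2-5m+1$; but the theorem requires $\tau_p=d^2-3d+3=4m^2-6m+3$, since this is the only singular point. Hence $\mu_p-\tau_p=m-2>0$, so $\tau_p\ne\mu_p$ and no weights formula is available; assuming quasi-homogeneity would even push $\tau(C)$ above the du Plessis--Wall maximum. (The same happens for $\sC\sL_1$: $\mu_p-\tau_p=m-1>0$ for all $m\ge 2$.) Concretely, after the substitution $x\mapsto x-y^2$ the germ becomes $\prod_i(a_ix+y^4)+\hbox{higher order terms}$, a semi-quasi-homogeneous but not quasi-homogeneous singularity whose higher-order terms genuinely lower $\tau$. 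So the step you flag as ``the most demanding point'' is not demanding bookkeeping --- the method you propose for it fails. This is precisely what the paper circumvents: it proves freeness of $\sC_1$ and $\sC\sL_1$ by induction on the number of conics, using Theorem \ref{r_union}(1) to construct a syzygy of degree $d-2$ not proportional to the linear one $(0,x,-2y)$; freeness together with ${\rm mrd}(f)=1$ then yields $\tau=d^2-3d+3$ with no local computation. Your proof becomes complete if, in the hyperosculating cases, you replace the local analysis by such a syzygy-theoretic (or any other) freeness argument.
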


 In the case $\tau(C)=(d-1)(d-2)$, the characterization of conic-line arrangements is given by the following result (see Theorem \ref{taumin}):
\begin{thmx}
Let $C=V(f)$ be a reduced conic-line arrangement in $\PP^2$ of degree $d\ge 6$. Then, $\tau(C)=d^2-3d+2$ if and only if 
 $C$ is either:
 \begin{itemize}
   \item ${\mathcal C}_2$:
 a conic arrangement given by the union of conics belonging to a bitangent pencil;
 
 \item ${\mathcal C}{\mathcal L}_6$: a conic-line arrangement 
 given by the union of conics belonging to a bitangent pencil and the line passing through the two bitangency points.
\end{itemize}
\end{thmx}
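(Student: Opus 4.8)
The plan is to prove both implications by first pinning down the minimal syzygy degree $r$. The key preliminary step for the ``only if'' part is to show that, for $d\ge 6$, the hypothesis $\tau(C)=(d-1)(d-2)$ forces $r=1$. Indeed, by the sharp du Plessis--Wall bounds \cite{PW}, for $r<d/2$ one has $\tau(C)\le (d-1)(d-r-1)+r^2$, and on the range $2\le r\le (d-1)/2$ the right-hand side is the convex function $(d-1)^2-(d-1)r+r^2$ of $r$, whose maximum on that interval is attained at $r=2$, where it equals $d^2-4d+7=(d-1)(d-2)-(d-5)$; for $r\ge d/2$ the bound carries the extra negative term $-\binom{2r-d+2}{2}$, and a direct computation shows it never reaches $(d-1)(d-2)$ either. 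Hence every $r\ge 2$ yields $\tau(C)<(d-1)(d-2)$ when $d\ge 6$, which is incompatible with the hypothesis, so $r=1$. This is exactly where $d\ge 6$ is needed: at $d=5$ one has $(d-1)(d-2)=12=(d-1)(d-3)+2^2$, so free curves with $r=2$ would also qualify.

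With $r=1$ established, $C$ admits a linear Jacobian syzygy, that is, a nonzero linear derivation $D=a\,\partial_X+b\,\partial_Y+c\,\partial_Z$ with $D(f)=0$ which is not a multiple of the Euler derivation; equivalently, $C$ is invariant under the one-parameter subgroup of $PGL_3(\CC)$ generated by the associated ternary matrix, as in \cite{PW2}. Classifying such arrangements by the Jordan type of this matrix (the eigenscheme viewpoint of the paper) shows that the invariant pencil must be a pencil of concurrent lines, a hyperosculating pencil, or a bitangent pencil, which produces exactly the nine types $\sL,\sC_1,\sC_2,\sC\sL_1,\dots,\sC\sL_6$ underlying the proof of Theorem \ref{prop: maxtau}. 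Since Theorem \ref{prop: maxtau} singles out the seven types with $\tau(C)=(d-1)(d-2)+1$, the only possible arrangements left with $\tau(C)=(d-1)(d-2)$ are $\sC_2$ and $\sC\sL_6$.

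For the ``if'' direction I would compute $\tau(C)$ for these two families directly. After normalizing the bitangent pencil as $XY+tZ^2$, whose members are smooth conics mutually tangent at the base points $p_1=[1:0:0]$ and $p_2=[0:1:0]$ and disjoint elsewhere, both $\sC_2=\prod_{i=1}^k(XY+t_iZ^2)$ (with $d=2k$) and $\sC\sL_6=Z\cdot\prod_{i=1}^k(XY+t_iZ^2)$ (with $d=2k+1$) have singular locus exactly $\{p_1,p_2\}$. In local coordinates at $p_1$ each conic is a parabola $y+t_iz^2$ and the axis $Z=0$ is the transverse line $z$, so the local equations $\prod_i(y+t_iz^2)$ and $z\prod_i(y+t_iz^2)$ are quasi-homogeneous of weights $(2,1)$; by Saito's criterion $\tau=\mu$ at each point. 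Using $\mu=2\sum_{i<j}I(B_i,B_j)-b+1$ for a union of $b$ smooth branches, with pairwise intersection multiplicity $2$ between conics and $1$ between the axis and a conic, gives $\tau_{p_1}=\tau_{p_2}=(2k-1)(k-1)$ for $\sC_2$ and $\tau_{p_1}=\tau_{p_2}=k(2k-1)$ for $\sC\sL_6$, so in both cases $\tau(C)=(d-1)(d-2)$. The linear syzygy is realized by $D=X\partial_X-Y\partial_Y$, which annihilates every factor, reconfirming $r=1$.

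The main obstacle is the separation, inside the $r=1$ list, of the exact-bound configurations from the ``plus one'' ones, which comes down to a careful local Tjurina count. The crucial phenomenon is that a common tangent line of the bitangent pencil meets every conic with contact $2$, so at a tangency point it enters the singularity as an extra tangent branch and raises the local Milnor number by one: repeating the computation above for $\sC\sL_2,\dots,\sC\sL_5$ gives $\tau_{p}=k(2k+1)$ at the affected point and total $\tau(C)=(d-1)(d-2)+1$, whereas the axis through the two tangency points is transverse to the conics and leaves $\sC\sL_6$ at the minimal value. Establishing completeness — that the Jordan-type analysis genuinely exhausts all conic-line arrangements with $r=1$ for $d\ge 6$, and that the hyperosculating families $\sC_1,\sC\sL_1$ and the line arrangement $\sL$ really sit at $(d-1)(d-2)+1$ rather than at the lower bound — is the most delicate part and carries the bulk of the argument.
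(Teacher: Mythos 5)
Your reduction to ${\rm mrd}(f)=1$ is correct and in fact more careful than the paper's own first step: Proposition \ref{bounds_tau} as stated does not by itself exclude large $r$, and you rightly invoke the refined du Plessis--Wall inequality with the correction term $-\binom{2r-d+2}{2}$ for $2r\ge d$; your arithmetic (the bound on $2\le r<d/2$ is maximized at $r=2$, where it equals $(d-1)(d-2)-(d-5)$) is right and isolates exactly why $d\ge 6$ is needed. Your ``if'' direction is also correct and genuinely different from the paper's: you compute $\tau(\sC_2)$ and $\tau(\sC\sL_6)$ locally, using quasi-homogeneity (so $\tau_p=\mu_p$ by Saito) and the smooth-branch formula $\mu=2\sum_{i<j}I(B_i,B_j)-b+1$, whereas the paper (Example \ref{mintau}) quotes \cite[Example 3.7]{BC} for the syzygy $(x,0,-z)$ and for non-freeness, and then uses the free/nearly-free dichotomy of Lemma \ref{reduciblecomponents}. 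Your computation is more elementary and self-contained; the paper's route yields the nearly-free resolution as a by-product.

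The genuine gap is the central classification step, and you flag it yourself. The claim that every conic-line arrangement of degree $d\ge 6$ with ${\rm mrd}(f)=1$ is one of the nine types $\sL,\sC_1,\sC_2,\sC\sL_1,\dots,\sC\sL_6$ is asserted in one sentence (``classifying such arrangements by the Jordan type of this matrix \dots shows'') and never carried out; your closing paragraph concedes that this ``carries the bulk of the argument.'' This is precisely where the paper invests all of its effort: in the proof of Theorem \ref{prop: maxtau} it (i) applies Theorem \ref{r_union}(3) and \cite[Proposition 5.5]{DIS} to pairs of extracted conics, showing each pair is bitangent or hyperosculating; (ii) proves the Claim that any three of the conics lie in a common pencil; and (iii) determines, by solving the condition that $D_0(f_1)_1+D_0(f_2)_1$ intersect nontrivially or contain a multiple of the Euler derivation, exactly which lines can be adjoined while keeping a linear syzygy. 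Your symmetry route (via \cite{PW2}, rather than the paper's eigenscheme viewpoint, which only enters in Section 4) can indeed be completed, but the missing work is comparable in size: one must verify that a semisimple generator admits irreducible invariant conics only when its weights are in arithmetic progression (yielding the bitangent pencil together with the three coordinate lines), that among non-semisimple generators only the regular nilpotent one does (yielding the hyperosculating pencil and its tangent line at the base point), and that mixed Jordan types admit no irreducible invariant conics and at most two invariant lines, hence contribute no arrangements of degree $\ge 4$. Without these verifications the ``only if'' direction is not proved. A minor logical remark: once the nine-type list and Theorem \ref{prop: maxtau} are available, the seven maximal types are excluded immediately because their Tjurina number is $(d-1)(d-2)+1$, so your final re-computation of $\tau$ for $\sC\sL_2,\dots,\sC\sL_5$ duplicates that step and can be dropped.
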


As a consequence of our first result, we can determine the degree of the 
polar map of conic-line arrangements with quasihomogenous singularities, that is with total Milnor number $\mu(C)$ (see Definition \ref{def: Milnor and Tjurina}) equal to the total Tjurina number. Indeed, recall that, when $C=V(f)$ is not a set of concurrent lines, the polar map $\nabla f$ associated with $f$ defines a generically finite rational map $\nabla f : \PP^2 \dasharrow \PP^2$ of degree $(d-1)^2 - \mu(C)$. Therefore, when $\mu(C) = \tau(C)$ and $\tau(C)$ is maximal, such a degree is minimal and equal to $d-2$. This occurs in cases $\sL$ and $\sC\sL_2$. 

Another related problem
is a Torelli-type question, posed by Dolgachev and Kapranov (see 
\cite{DK}), which asks whether the rank $2$ vector bundle of logarithmic vector fields $\sT\langle C \rangle$, given as the kernel of the map 
$$ (\partial _x f, \partial_y f, \partial_z f):\sO^{\oplus 3}_{\PP^2}(1) \to \sJ_f(d),
$$ with $\sJ_f$ the sheafyfied Jacobian ideal,
determines uniquely the curve, see also \cite{DSer} and \cite {D3}. In the cases under consideration
this result does not hold. 

Finally, we observe that in the case of maximal Tjurina number and linear syzygy given by three linearly independent forms, the Jacobian schemes 
$\Sigma_f$ turn out to be also eigenschemes (for the definition see \ref{def: eigenscheme}) of suitable partially symmetric tensors of order $d-1$. This happens in the cases $\sL$ and $\sC \sL_2$,
and we can apply the results of \cite{OO} and \cite{BGV}. In particular, such schemes arise also as zeroes of a section of 
the twisted tangent bundle $\sT_{\PP^2} (d-3)$,
and we have that
the blow-up ${\rm Bl}_{\Sigma_f} \PP^2 \subset \PP^2 \times \PP^2$ is a complete intersection of the projective
bundle $\PP(\sT_{\PP^2})$ and a divisor of bidegree $(1,d-2)$, the possible contracted curves by the polar map are only lines, and as a consequence no subscheme 
of degree $k(d-1)$ is contained in a curve of degree
$k$, for any $2 \le k \le d-2$. We further specify that in the line arrangement case $\sL$, the contracted
lines are precisely the components of $V(f)$, while in the conic-line arrangement, the only contracted line is the tangent line appearing in the configuration $V(f)$.

The study of singular curves with a minimal Jacobian syzygy of higher degree seem to be very involved and wild. We believe that the approach concerning the study of the fibers of the polar map deserves further investigations.

The techniques involved in our study rely on a result relating the syzygy module of a product of polynomials, with no common factor, given in \cite[Theorem 5.1 and Corollary 5.3] {DIS}, and on the characterization of the Hilbert-Burch matrix in the case of particular linear Jacobian syzygies given in 
\cite[Theorem 3.5]{BC}. The proofs of our main results are based on a careful analysis of the possible Jacobian syzygies of any subcurve of the considered curves.

The organization of the paper is the following: 
in the next section we recall definitions and preliminary results regarding Jacobian ideals, Jacobian sygyzies and eigenschemes
of ternary tensors.

In Section 3 we determine the geometric classification of Jacobian schemes 
corresponding to Jacobian ideals with a linear syzygy.

Finally, in Section 4, we characterize 
conic-line arrangements with a Jacobian scheme, which is also an eigenscheme of some ternary tensor. For such curves we describe the 
degree $d-2$ generically finite polar map, characterizing its contracted curves.

\vskip 2mm
\noindent {\bf Acknowledgement}. Most of this work was done while the first author was a guest of the Universitat de Barcelona, and she would like to thank the people of the Departament de Matem\`atiques i Inform\`atica for their warm hospitality.

\section{Preliminaries} 
This section contains the basic definitions and results on Jacobian ideals associated to reduced singular plane curves as well as on eigenschemes, and it lays the groundwork for the results in the later sections.

From now on, we fix the polynomial ring $R=\CC[x,y,z]$ and we denote by $C =V(f)$ a reduced curve of degree $d$ in the
complex projective plane $\PP^2=\Proj(R)$ defined by a homogeneous polynomial $f\in R_d$.

\vskip 2mm

\subsection{Jacobian ideal of a reduced curve} The Jacobian ideal $J_f$ of 
 a reduced singular plane curve $C =V(f)$ of degree $d$ is definied as the homogeneous ideal in $R$ generated by the 3 partial derivatives
$\partial_x f$, $\partial_y f$ and $\partial_z f$. We denote by ${\rm Syz}(J_f)$ the graded $R$-module of all Jacobain relations for $f$, i.e.,
$$ 
{\rm Syz}(J_f):=\{(a,b,c)\in R ^3\mid a\partial_x f + b\partial_y f+ c\partial_z f = 0 \}.
$$
We will denote by ${\rm Syz}(J_f)_t$ the homogeneous part of degree $t$ of the graded $R$-module ${\rm Syz}(J_f)$;
for any $t \ge 0$, we have that ${\rm Syz}(J_f)_t$ is a $\CC$-vector space of finite dimension.
The minimal degree of a Jacobian syzygy for $f$ is the integer ${\rm mrd}(f)$ defined to be the smallest integer $r$ such that there
is a nontrivial relation
$ a\partial_x f + b\partial_y + c\partial_z = 0$
among the partial derivatives $\partial_x f$, $\partial_y f$ and $\partial_z f$ of $f$ with coefficients $a, b, c \in R_r$. More precisely, we have:
$${\rm mrd}(f)=min\{ n\in \NN \mid {\rm Syz}(J_f)_n\ne 0\}.
$$
It is well known that ${\rm mrd}(f) = 0$, i.e. the three partials
$\partial_x f$, $\partial_y f$ and $\partial_z f$ are linearly dependent, if and only if $C$ is
a union of lines passing through one point $p\in \PP^2$. So, we will always assume that ${\rm mrd}(f)>0$ and one of our goals will be to give a geometric classification of conic-line arrangements $C=V(f)$ of degree $d$ with ${\rm mrd}(f)=1$, see Theorems \ref{prop: maxtau} and \ref{taumin}.

\begin{definition} Let $C =V(f)$ be a reduced singular plane curve of degree $d$. We say that $C$ is {\em free} if the graded $R$-module ${\rm Syz}(J_f)$ of all Jacobian relations for $f$ is a free $R$-module, i.e. 
\begin{equation}\label{free}
{\rm Syz}(J_f)=R(-d_1)\oplus R(-d_2)
\end{equation}
with $d_1+d_2=d-1$. In this case $(d_1,d_2)$ are called the {\it exponents} of $C$.

We say that $C$ is {\em nearly free} if the minimal free resolution of ${\rm Syz}(J_f)$ looks like:
\begin{equation}\label{nearly free}
0 \longrightarrow R(-d-d_2) \longrightarrow R(1-d-d_1)\oplus R(1-d-d_2)^2 \longrightarrow {\rm Syz}(J_f) \longrightarrow 0
\end{equation}
with $d_1\le d_2$ and $d_1+d_2=d$.
\end{definition}

\begin{example}
(1) The rational cuspidal quintic $C\subset \PP^2$ of equation $C=V(f)=V(y^4z+x^5+x^2y^3)$ is free. Indeed, $J_f=(5x^4+2xy^3,3x^2y^2+4y^3z,y^4)\subset R$ and it has a minimal free $R$-resolution of the following type:
$$
0 \longrightarrow R(-6)^2 \longrightarrow R(-4)^3 \longrightarrow J_f \longrightarrow 0. 
$$
We have ${\rm mrd}(f)=2$, $\deg(J_f)=12$ and $C$ is free.
\vskip 2mm
\noindent{}
(2) The rational cuspidal quintic $C\subset \PP^2$ of equation $C=V(f) =V(y^4z+x^5)$ is nearly free. Indeed, $J_f=(5x^4,4y^3z,y^4)\subset R$ and it has a minimal free $R$-resolution of the following type:
$$
0 \longrightarrow R(-9) \longrightarrow R(-5)\oplus R(-8)^2 \longrightarrow R(-4)^3 \longrightarrow J_f \longrightarrow 0. 
$$
We have ${\rm mrd}(f)=1$, $\deg (J_f)=12$ and $C$ is not free but it is nearly free.

\vskip 2mm
\noindent 
(3) The nodal quintic $C\subset \PP^2$ of equation $C=V((x^2+y^2+z^2)(x^3+y^3+z^3))=0$ is neither free nor nearly free. Indeed, $J_f=(5x^4+3x^2(y^2+z^2)+2x(y^3+z^3),2x^3y+3x^2y^2+5y^4+3y^2z^2+2yz^3,2x^3z+2y^3z+3x^2z^2+3y^2z^2+5z^4)\subset R$ and it has a minimal free $R$-resolution of the following type:
$$
0 \longrightarrow R(-9)\oplus R(-10) \longrightarrow R(-7)\oplus R(-8)^3 \longrightarrow R(-4)^3 \longrightarrow J_f \longrightarrow 0. 
$$
We have ${\rm mrd}(f)=3$ and $\deg (J_f)=6$. 
\end{example}

In general, the condition that a reduced singular curve $C=V(f)$ in $\PP^2$ is free is
equivalent to the Jacobian ideal $J_f$ of $f$ being arithmetically Cohen-Macaulay
of codimension two; such ideals are completely described by the
Hilbert-Burch theorem \cite{e}: if $I = \langle g_1,\ldots,
g_m\rangle \subset R$ is a Cohen-Macaulay ideal of codimension two, then $I$ is defined
by the maximal minors of the $(m+1)\times m$ matrix of the first
syzygies of the ideal $I$. Combining this with Euler's formula for a homogeneous
polynomial, we get that a free curve $C=V(f)$ in $\PP^2$ has a very
constrained structure: $f = \det(M)$ for a $3 \times 3$ matrix $M$, with one row consisting of the 3 variables, and the
remaining $2$ rows are the minimal first syzygies of $J_f$.

\vskip 2mm
Free curves are related with the total Tjurina number. We first recall some notions from singularity theory.

Let $C=V(f)\subset \mA^2$ be a reduced, not necessarily irreducible, plane curve and fix a singular point 
$p\in C$. 
Let $\CC \{x, y \}$ denote the ring of convergent power series.

\begin{definition}\label{def: Milnor and Tjurina}
The {\em Milnor number} of a reduced plane curve $C=V(f)$ at $(0,0)\in C$ is 
$$
\mu _{(0,0)}(C) = \dim \CC \{x, y \}/ \langle \partial_x f,\partial_y f \rangle.
$$ 

The {\em Tjurina number} of a reduced plane curve $C=V(f)$ at $(0,0)\in C$ is
$$
\tau _{(0,0)}(C) = \dim \CC \{x, y \}/ \langle \partial_x f,\partial_y f ,f\rangle.
$$

To define $\mu _p(C)$ and $\tau _p(C)$ for an arbitrary point $p$, translate $p$ in the origin.
\end{definition}

We clearly have $\tau _{(0,0)}(C)\le \mu _{(0,0)}(C)$.
For a projective plane curve $C=V(f)\subset \PP^2$, it holds:
$$
\tau (C):=\deg J_f = \sum _{P\in Sing(C)} \tau _p(C)
$$
where $J_f$ is the Jacobian ideal. We call to $\tau (C)$ the {\em total Tjurina number} of $C$.

\vskip 2mm A nice result of du Plessis and Wall gives upper and lower bounds for the total Tjurina number $\tau(C)$ of a reduced plane curve $C=V(f)\subset \PP^2$ in terms of its degree $d$ and the minimal degree ${\rm mrd}(f)$ of a syzygy of its Jacobian ideal $J_f$, and relates the freeness of a curve with $\tau(C)$. More precisely, we have:

\begin{proposition}\label{bounds_tau} Let
$C=V(f)$ be a reduced singular plane curve of degree $d$ and let $r:={\rm mrd}(f)$. It holds:
$$
(d-1)(d-r-1)\le \tau(C)\le (d-1)(d-r-1)+r^2.
$$
Moreover, if $\tau(C)= (d-1)(d-r-1)+r^2$, then the curve $C$ is free, and such as condition is also sufficient if $d> 2r$.
\end{proposition}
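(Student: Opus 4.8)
The plan is to recast the statement entirely in terms of the rank-two logarithmic bundle $\sT\langle C\rangle$ introduced above. First I would record that $\sT\langle C\rangle$ is locally free of rank $2$, being the kernel of a morphism from a bundle onto the torsion-free sheaf $\sJ_f(d)$ on the smooth surface $\PP^2$, and fit it into the exact sequence
\[
0 \to \sT\langle C\rangle \to \sO_{\PP^2}(1)^{\oplus 3} \xrightarrow{(\partial_x f,\partial_y f,\partial_z f)} \sJ_f(d) \to 0,
\]
where $\sJ_f$ is the ideal sheaf of the $0$-dimensional scheme $\Sigma_f$ of length $\tau(C)$. Since $\sJ_f(d)$ has $c_1 = d$ and $c_2 = \tau(C)$, a direct Chern-class computation gives $c_1(\sT\langle C\rangle) = 3-d$ and $c_2(\sT\langle C\rangle) = (d-1)(d-2)+1-\tau(C)$. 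The key dictionary is the degree match ${\rm Syz}(J_f)_k = H^0(\sT\langle C\rangle(k-1))$, so that $r = {\rm mrd}(f)$ is exactly the least integer with $H^0(\sT\langle C\rangle(r-1)) \neq 0$, while $H^0(\sT\langle C\rangle(r-2)) = 0$.

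Next I would take a minimal syzygy $(a,b,c)\in (R_r)^{\oplus 3}$, viewed as a nonzero section $s\in H^0(\sT\langle C\rangle(r-1))$. Minimality forces $\gcd(a,b,c)=1$ (a common factor would yield a syzygy of strictly smaller degree) and in particular prevents $s$ from vanishing on a divisor; hence its zero scheme $W$ is $0$-dimensional and, because $\sT\langle C\rangle(r-1)\hookrightarrow \sO(r)^{\oplus 3}$ is a subbundle, $W$ equals $V(a,b,c)$ scheme-theoretically. The Koszul sequence $0\to \sO \xrightarrow{s} \sT\langle C\rangle(r-1) \to \sJ_W(2r-d+1) \to 0$ then gives $\deg W = c_2(\sT\langle C\rangle(r-1))$, and expanding this Chern class produces the master formula $\tau(C) = (d-1)(d-r-1) + r^2 - \deg W$.

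From this both the upper bound and the freeness statement are immediate. The upper bound is just $\deg W \ge 0$. For the equality case, $\deg W = 0$ means $s$ is nowhere vanishing, so the Koszul sequence exhibits $\sO$ as a sub-line-bundle with invertible quotient; as $H^1(\PP^2,\sO(\ast))=0$ the extension splits, $\sT\langle C\rangle$ decomposes as a sum of two line bundles, and ${\rm Syz}(J_f)$ is free by the Hilbert--Burch description recalled above, i.e. $C$ is free with exponents $(r,d-1-r)$. Conversely, if $C$ is free then $\sT\langle C\rangle$ splits, the minimal section can be taken nowhere vanishing, and the master formula with $\deg W = 0$ returns $\tau(C)=(d-1)(d-r-1)+r^2$; since freeness makes $r$ the \emph{smaller} exponent, $2r\le d-1<d$, which is precisely the role of the hypothesis $d>2r$ in the sufficiency direction.

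The main obstacle is the lower bound, which amounts to the estimate $\deg W \le r^2$. The plan here is Bezout-theoretic: because $\gcd(a,b,c)=1$, the ideal $(a,b,c)$ has height $2$, so two general $\CC$-linear combinations $g,h$ of $a,b,c$ (each of degree $r$) are coprime and hence form a regular sequence; then $W=V(a,b,c)\subseteq V(g,h)$, and the complete intersection $V(g,h)$ has length $r^2$, giving $\deg W \le r^2$ and therefore $\tau(C)\ge (d-1)(d-r-1)$. The one genuinely non-formal point is checking that some pair of combinations is coprime — ruling out that \emph{every} combination shares a common factor, which is exactly where $\gcd(a,b,c)=1$ (equivalently, that the net spans no principal prime) is used; once this is in hand, everything else reduces to bookkeeping with the master formula.
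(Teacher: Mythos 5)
The paper itself gives no argument for this proposition: it is imported wholesale from du Plessis--Wall \cite[Theorem 3.2]{PW} and Dimca \cite[Corollary 1.2]{D}. Your proof is therefore a genuinely different, self-contained route, and it is essentially correct. The dictionary ${\rm Syz}(J_f)_k=H^0(\sT\langle C\rangle(k-1))$ (immediate from left-exactness of global sections), the Chern class computation, the master formula $\tau(C)=(d-1)(d-r-1)+r^2-\deg W$, the dichotomy $\deg W\ge 0$ with equality forcing a split exact sequence (hence freeness, with exponents $(r,d-1-r)$), and the Bezout estimate $\deg W\le r^2$ together prove all four assertions of the statement from one identity; your observation that freeness automatically gives $2r\le d-1<d$ also correctly disposes of the sufficiency clause. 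This is close in spirit to Dimca's sheaf-theoretic proof of the freeness equivalence, whereas du Plessis and Wall's original bounds come from discriminant/deformation arguments; the payoff of your version is that the upper bound, the lower bound, and the freeness criterion all fall out of the single number $\deg W$.

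One intermediate claim is false, though harmlessly so. The inclusion $\sT\langle C\rangle(r-1)\hookrightarrow\sO_{\PP^2}(r)^{\oplus 3}$ is \emph{not} a subbundle: fiberwise injectivity fails exactly at the points of $\Sigma_f$, because the quotient $\sJ_f(d)$ is not locally free at the singular points of $C$ (it has nonzero ${\rm Tor}_1$ there). Consequently $W=Z(s)$ need not equal $V(a,b,c)$. A counterexample is supplied by the paper's own Example \ref{tau_max}(2): for the arrangement $\sC_1$ the minimal syzygy is $(0,x,-2y)$, so $V(a,b,c)=\{(0:0:1)\}$ is a point, while $W=\emptyset$ because $\sC_1$ is free with maximal Tjurina number. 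What is true, and is all your Bezout step needs, is the inclusion $W\subseteq V(a,b,c)$: in a local trivialization the coordinates $a,b,c$ of the image of $s$ are function-combinations of the two components $s_1,s_2$ of $s$, so $(a,b,c)\subseteq(s_1,s_2)=I_W$ locally. Replacing ``equals'' by ``is contained in'' repairs the step, and then $W\subseteq V(a,b,c)\subseteq V(g,h)$ gives $\deg W\le r^2$ as you intend. For the coprimality point you flagged, the clean argument is: fix any nonzero combination $g$; for each irreducible factor $q$ of $g$, the combinations divisible by $q$ form a linear subspace of the span of $a,b,c$, proper because $\gcd(a,b,c)=1$; so some combination $h$ avoids the finite union of these subspaces and is coprime to $g$.
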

\begin{proof}
    See \cite[Theorem 3.2]{PW} and \cite[Corollary 1.2] {D}.
\end{proof}

Next result will play an important role in next section. It relates the minimal degree ${\rm mrd}(f)$ of a syzygy of the Jacobian ideal of a reducible plane curve $C=C_1\cup C_2=V(f_1f_2)$ with the minimal degrees ${\rm mrd}(f_1)$ and ${\rm mrd}(f_2)$ of a Jacobian syzygy of $C_1=V(f_1)$ and $C_2=V(f_2)$, respectively. Observe that the syzygy module can be identified with the module of derivations {\it killing} the polynomial $g$, that is the submodule $D_0 (g)$ of the free $R$-module 
$D(g) =\{ \delta = a \partial_x +b \partial _y +c \partial_z \ : \ a,b,c \in R\}$ of $\CC$-derivations of the polynomial ring $R$ annihilated by $g$:
$$
D_0 (g)= \{ \delta \in D(g) \ : \ \delta g =0\}.
$$
In the case of a smooth curve $V(g)$, the syzygy module is trivial, so we will indeed consider the module 
$D_0 (g)$ instead of ${\rm Syz}(J_g)$.

\begin{theorem}\label{r_union} Let $C_i=V(f_i)$ for $i=1,2$ be two reduced curves in $\PP^2$ without common irreducible components. Set $d_i=\deg f_i$ and $r_i={\rm mrd}(f_i)$ for $i=1,2$. Let $C=V(f_1f_2)$ be the union of $C_1$ and $C_2$, let $d=d_1+d_2=\deg f$ and $r={\rm mrd}(f)$. Then it holds:

\begin{enumerate}
\item
If $\delta_1 
\in D_0 (f_1)$, then
$$
\delta= f_2 \delta_1 - \frac{1}{d} \delta_1 ( f_2)
\ E \in D_0 (f),
$$
where 
$E=x\partial_x +y \partial_y+z\partial_z$ denotes the Euler
derivation;

\item  $D_0 (f) \subset D_0 (f_1) \cap D_0 (f_2)$; more precisely, for $\delta \neq 0$, one has 
$\delta \in D_0 (f)$ if and only if $\delta$ can be written in a unique way in the form $\delta = h\ E + \delta_1=-h\ E + \delta_2$, where $h$ is a suitable homogeneous polynomial and $\delta_j \in D_0 (f_j)$ are non-zero for $j=1,2$.

\item In particular, we have
$$
\max(r_1,r_2)\le r\le \min(r_1+d_2,r_2+d_1).
$$
and $r$ is the minimal integer $t$ such that either $D_0 (f_1)_t\cap D_0 (f_2)_t\ne 0$ or $D_0 (f_1)_t + D_0 (f_2)_t$ contains a non-zero multiple
of the Euler derivation $E$.
\end{enumerate}
\end{theorem}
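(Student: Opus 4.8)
The engine of all three statements is Euler's identity $E(g)=(\deg g)\,g$ together with the Leibniz rule for a derivation, so the plan is to reduce everything to these two facts and to the hypothesis that $f_1$ and $f_2$ have no common factor (they are reduced with no common component, hence coprime in the UFD $R$).

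For part (1) I would simply verify $\delta(f)=0$ by direct computation. For the derivation $\delta=f_2\delta_1-\frac1d\,\delta_1(f_2)\,E$ and any $g$ one has $\delta(g)=f_2\,\delta_1(g)-\frac1d\,\delta_1(f_2)\,E(g)$. Applying this to $g=f$, using $\delta_1(f)=\delta_1(f_1f_2)=f_2\,\delta_1(f_1)+f_1\,\delta_1(f_2)=f_1\,\delta_1(f_2)$ (since $\delta_1$ kills $f_1$) and $E(f)=d\,f$, the two contributions cancel:
\[
\delta(f)=f_2\cdot f_1\,\delta_1(f_2)-\frac1d\,\delta_1(f_2)\cdot d\,f=f_1f_2\,\delta_1(f_2)-\delta_1(f_2)\,f=0 .
\]
This is the only computational step, and it is routine.

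For part (2) the crucial input is the coprimality of $f_1,f_2$. Given $\delta\in D_0(f)$, the relation $0=\delta(f)=f_2\,\delta(f_1)+f_1\,\delta(f_2)$ forces $f_1\mid\delta(f_1)$ and $f_2\mid\delta(f_2)$, so $\delta(f_1)=a\,f_1$ and $\delta(f_2)=-a\,f_2$ for a single homogeneous $a$. Correcting $\delta$ by the appropriate Euler multiples then projects it into each summand: $\delta_1:=\delta-\frac{a}{d_1}E$ satisfies $\delta_1(f_1)=a f_1-\frac{a}{d_1}d_1 f_1=0$, and symmetrically $\delta_2:=\delta+\frac{a}{d_2}E$ kills $f_2$, so $\delta=\delta_1+hE=\delta_2-h'E$ with explicit $h,h'$; their difference $\delta_1-\delta_2$ is again a multiple of $E$, which is precisely the feature needed in part (3). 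Uniqueness follows since $E(f_i)=d_i f_i\ne 0$ pins down the Euler coefficient as $\delta(f_i)/(d_i f_i)$, and each $\delta_j$ must be nonzero because a nonzero multiple of $E$ never lies in $D_0(f_j)$. The converse inclusion is exactly part (1).

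Part (3) is then degree bookkeeping built on this decomposition. If $\delta\in D_0(f)_r$ is nonzero of minimal degree, then $\delta_1\in D_0(f_1)_r$ and $\delta_2\in D_0(f_2)_r$ are nonzero, giving $r\ge\max(r_1,r_2)$; conversely, feeding a minimal $\delta_i\in D_0(f_i)_{r_i}$ into the construction of part (1) produces a nonzero element of $D_0(f)_{r_i+d_j}$, whence $r\le\min(r_1+d_2,r_2+d_1)$. The characterization of $r$ as the least $t$ with $D_0(f_1)_t\cap D_0(f_2)_t\ne 0$ or with $D_0(f_1)_t+D_0(f_2)_t$ containing a nonzero Euler multiple is read off directly: a nonzero $\delta\in D_0(f)_t$ yields $\delta_1,\delta_2$ whose difference is a multiple of $E$, so either that multiple vanishes and $\delta_1=\delta_2$ lies in the intersection, or it is a nonzero Euler multiple in the sum; conversely either condition lets one solve for a single Euler correction producing an element of $D_0(f)_t$. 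I expect the main obstacle to be the systematic role of the Euler derivation: one must repeatedly invoke that $cE\in D_0(g)$ forces $c=0$ to secure the nonvanishing of the projected and lifted derivations and the uniqueness of the splitting, while simultaneously tracking the degree shift $r_i\mapsto r_i+d_j$ caused by the multiplication by $f_j$ in part (1). The divisibility step in part (2), where coprimality is used to pass from $f_2\mid f_1\,\delta(f_2)$ to $f_2\mid\delta(f_2)$, is the other point where the hypotheses are indispensable.
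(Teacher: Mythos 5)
Your argument is sound and, since the paper gives no proof of its own here (it simply cites Dimca--Ilardi--Sticlaru, Theorem 5.1 and Corollary 5.3), what you wrote is essentially the argument of that cited source: Leibniz plus Euler for (1), coprimality of $f_1,f_2$ and the divisibilities $f_i\mid\delta(f_i)$ for (2), and Euler-correction bookkeeping for (3). The one step you leave implicit, the converse direction in (3) when $D_0(f_1)_t+D_0(f_2)_t$ contains $cE$ with $c\neq 0$, does work as you indicate: writing $cE=\eta_1+\eta_2$ with $\eta_j\in D_0(f_j)_t$, one gets $\eta_1(f_2)=cE(f_2)-\eta_2(f_2)=cd_2f_2$, hence $\eta_1(f)=f_1\eta_1(f_2)=cd_2f$, so $\delta:=\eta_1-\frac{cd_2}{d}E$ lies in $D_0(f)_t$ and is nonzero, because a nonzero multiple of $E$ can never lie in $D_0(f_1)$.

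However, you should have flagged that your own formulas contradict statement (2) exactly as printed; as printed it is a misquotation of the cited result. You obtain $\delta=\frac{a}{d_1}E+\delta_1=-\frac{a}{d_2}E+\delta_2$, so the two Euler coefficients are $a/d_1$ and $-a/d_2$; these are of the form $h,-h$ only when $a=0$ or $d_1=d_2$, whereas the statement asserts $\delta=hE+\delta_1=-hE+\delta_2$ with a single $h$. Concretely, for $f_1=x$, $f_2=yz$ and $\delta=x\partial_x-y\partial_y\in D_0(xyz)$ the coefficients are $1$ and $-1/2$. For the same reason, what your divisibility step proves is $D_0(f)\subset D(f_1)\cap D(f_2)$, where $D(f_i)$ denotes derivations with $\delta(f_i)\in(f_i)$, not the printed inclusion $D_0(f)\subset D_0(f_1)\cap D_0(f_2)$, which is false: in the same example $\delta(f_1)=x\neq 0$. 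The correct statement, and the one you actually proved, is $\delta=d_2hE+\delta_1=-d_1hE+\delta_2$ (equivalently, Euler coefficients $h_1,h_2$ constrained by $d_1h_1+d_2h_2=0$), with $h=a/(d_1d_2)$ in your notation. Since part (3) only uses that $\delta_2-\delta_1$ is a multiple of $E$, this repair affects neither the rest of your proof nor the way the theorem is applied later in the paper; but a blind proof attempt should state explicitly that the claim needs this correction, rather than silently replacing ``$-h$'' by an unrelated ``$h'$''.
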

\begin{proof}
See \cite[Theorem 5.1]{DIS} and \cite[Corollary 5.3]{DIS}.
\end{proof}

\subsection{Eigenschemes in $\PP^2$}  
Since we shall investigate whether a Jacobian scheme is also an eigenscheme of some tensor, we conclude the preliminary section with the basic definitions and results concerning tensor eigenschemes.

There are several notions of eigenvectors and eigenvalues for tensors, as introduced independently in \cite{L} and \cite{Q}. Here we focus our attention on the algebraic-geometric point of view. We choose a basis for $\CC^3$, we identify a partially symmetric tensor $T$ with a triple of homogeneous polynomials of degree $d-2$ and we describe the eigenpoint of a tensor $T$ algebraically by the vanishing of the minors of a homogeneous matrix. More precisely, we have:

\begin{definition}  \label{def: eigenscheme}
Let $T=(g_1,g_2, g_3)\in (Sym^{d-2}\CC^{3})^{\oplus 3}$ be a partially symmetric tensor. The {\em eigenscheme} of $T$ is the closed subscheme $E(T)\subset \PP^2$ defined by the $2\times 2$ minors of the homogeneous matrix
\begin{equation}\label{def_matrix}
M=\begin{pmatrix} x & y  & z \\
g_1 & g_2  & g_3
\end{pmatrix}.
\end{equation}
 If $T$ is general, then $E(T)$ is a $0$-dimensional scheme (see, for instance, \cite{Abo}).
Moreover, by the Hochster-Eagon Theorem \cite{HE}, the coordinate ring $R/I(E(T))$ is a Cohen-Macaulay ring, and as a consequence, the homogeneous ideal $I(E(T))$ is saturated. Hence $E(T)$ is a standard determinantal scheme.
When the tensor $T$ is symmetric, i.e., there is a homogeneous polynomial $f$ and $g_0=\partial_x f$, $g_1=\partial_y f$ and $g_2=\partial_z f$, we denote its eigenscheme by $E(f)$.
\end{definition}
 It is worthwhile to point out that in the case of a symmetric tensor corresponding to some homogeneous polynomial $f$, the eigenpoints are the fixed points of the polar map $$\nabla f=(\partial_xf,\partial_yf,\partial _zf):\PP^2 \dasharrow \PP^2$$ of $f$.
 \begin{example}
\label{ex: Fermat}
\rm
We consider the Fermat cubic $V(f)$ with $f=x^3+y^3+z^3\in \CC [x,y,z]$. The eigenscheme $E(f)$ is the 0-dimensional subscheme of $\PP^2$ of length 7 defined by the maximal minors of 
$$
M=\begin{pmatrix} x & y & z \\
x^2 & y^2 & z^2 
\end{pmatrix}.
$$
Therefore, $E(f)=\{(1,0,0,),(0,1,0),(0,0,1),(1,1,0),(1,0,1),  (0,1,1),(1,1,1)\}.$
\end{example}

 If we fix an integer $d\ge 2$ and 
 $T=(g_1,g_2,g_3)\in (Sym^{d-2}\CC^{3})^{\oplus 3}$ is a general partially symmetric tensor, then it holds (see \cite[Theorem A2.10]{e}):
 \begin{itemize}
     \item[(1)] $E(T)$ is a reduced 0-dimensional scheme of length 
          $d^2-3d+3$.
     \item[(2)] The homogeneous ideal $I(E(T))\subset R$ has a minimal free $R$-resolution
     $$ 0 \longrightarrow R(-2d+3)\oplus R(-d) \longrightarrow   
      R(-d+1)^{3}\longrightarrow I(E(T)) \longrightarrow 0.
     $$
     \end{itemize}
The two conditions above are not sufficient for a planar $0$-dimensional subscheme to be an eigenscheme, and a characterization is given by the following result (see \cite[Proposition 5.2]{ASS}.

\begin{proposition}
Let Z be a $0$-dimensional subscheme of $\PP^2$ of degree $d^2-3d+3$. Then $Z$ is the eigenscheme of a tensor if and only if its Hilbert-Burch matrix has the form
$$
\begin{pmatrix} L_1 & G_1 \\
L_2 & G_2\\
L_3 & G_3\\
\end{pmatrix},
$$
where $L_1,L_2,L_3$ are linearly independent linear forms.
\end{proposition}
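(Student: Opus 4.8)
The plan is to identify the Hilbert--Burch matrix of an eigenscheme directly with the transpose of its defining $2\times 3$ matrix, and conversely to recognise a Hilbert--Burch matrix of the prescribed shape as such a defining matrix after a linear change of coordinates. The bridge is the Cramer/Laplace identity: writing $p_1,p_2,p_3$ for the signed $2\times 2$ minors of
\[
M=\begin{pmatrix} x & y & z \\ g_1 & g_2 & g_3 \end{pmatrix},
\]
both rows of $M$ annihilate $(p_1,p_2,p_3)^{T}$, that is $xp_1+yp_2+zp_3=0$ and $g_1p_1+g_2p_2+g_3p_3=0$. Thus the two rows of $M$ are syzygies of the generators $p_i$ of $I(E(T))$, and $M^{T}$ is a $3\times 2$ matrix of syzygies.

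For the ``only if'' direction I would take $Z=E(T)$ with $T=(g_1,g_2,g_3)$ and $\deg g_i=d-2$. By the resolution recalled before the statement, $I(Z)$ is minimally generated by the three minors $p_i$ of degree $d-1$, with minimal syzygies in degrees $d$ and $2d-3$. The identity above exhibits these syzygies as the columns of $M^{T}$, whose degrees are exactly $d$ (from the linear row $(x,y,z)$) and $2d-3$ (from $(g_1,g_2,g_3)$); hence $M^{T}$ is a minimal Hilbert--Burch matrix of $Z$ and its first column $(x,y,z)^{T}$ consists of three linearly independent linear forms. This is the asserted normal form.

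For the converse, assume $\deg Z=d^2-3d+3$ and that a Hilbert--Burch matrix of $Z$ has the form $\begin{pmatrix} L_1 & G_1 \\ L_2 & G_2 \\ L_3 & G_3 \end{pmatrix}$ with the $L_i$ linearly independent. In a minimal Hilbert--Burch matrix each column is a single homogeneous syzygy, so the $G_i$ have a common degree $e$; the three generators then have degree $e+1$ and the two syzygies degrees $e+2$ and $2e+1$. A standard length computation from this resolution, using $\deg Z=\frac{1}{2}\bigl(\sum b_j^2-\sum a_i^2\bigr)$ for generator degrees $a_i$ and syzygy degrees $b_j$, gives $\deg Z=e^2+e+1$; equating with $d^2-3d+3$ forces $e=d-2$ as the unique nonnegative root. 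A linear change of coordinates sends $(L_1,L_2,L_3)$ to $(x,y,z)$ --- possible precisely because the $L_i$ are linearly independent --- and turns the transpose of the matrix into $\begin{pmatrix} x & y & z \\ G_1' & G_2' & G_3' \end{pmatrix}$ with $\deg G_i'=d-2$, which is exactly the matrix defining the eigenscheme of $T'=(G_1',G_2',G_3')$. Since a matrix and its transpose have the same maximal minors, $I(Z)$ is generated by them, so $Z$ is this eigenscheme; because the family of eigenschemes is $GL_3$-invariant, $Z$ is an eigenscheme in the original coordinates too.

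The computational ingredients (the Cramer identity and the length count) are routine; the step demanding care is the non-uniqueness of the Hilbert--Burch matrix. I expect the main obstacle to be arguing cleanly that ``has the form'' should be read as ``admits a representative of the form'' up to invertible row and column operations and a coordinate change, and that the linear independence of the $L_i$ is exactly the condition allowing the linear column to be normalised to $(x,y,z)$ --- this is what singles out eigenschemes among all arithmetically Cohen--Macaulay $0$-dimensional schemes sharing the numerical resolution. I would also check that each minimal syzygy column is forced to be homogeneous of a single degree, so that $e$ is well defined, and that the $3\times 2$ shape already encodes that $I(Z)$ is minimally three-generated.
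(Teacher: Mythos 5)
The paper itself offers no proof of this proposition: it is quoted directly from \cite[Proposition 5.2]{ASS}, so there is no internal argument to compare yours against. Judged on its own, your proof is correct, and it is in essence the Hilbert--Burch argument one would expect (and that the cited reference uses). The forward direction works: the Laplace identities $xp_1+yp_2+zp_3=0$ and $g_1p_1+g_2p_2+g_3p_3=0$ exhibit the rows of $M$ as syzygies, and since the eigenscheme is assumed $0$-dimensional, the ideal $I_2(M)$ has codimension $2$, so the Hochster--Eagon theorem \cite{HE} (already recorded after Definition \ref{def: eigenscheme}) together with Hilbert--Burch shows that $I_2(M)$ is saturated and that the complex built on $M^{T}$ is a resolution, minimal because every entry has positive degree for $d\ge 3$. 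One imprecision here: you appeal to ``the resolution recalled before the statement'', but in the paper that display is asserted only for a \emph{general} tensor, whereas you need it for an arbitrary tensor with $0$-dimensional eigenscheme; the correct justification is Hilbert--Burch itself, which yields that resolution for any such $T$. Your converse is also sound: each column of a minimal Hilbert--Burch matrix is homogeneous, the linear column forces all three generator degrees to coincide, and the length formula gives $\deg Z=e^2+e+1$, which together with $d^2-3d+3=(d-2)^2+(d-2)+1$ pins down $e=d-2$.

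The one step you should tighten is the final normalisation. Changing coordinates replaces $Z$ by a projectively equivalent scheme $\phi(Z)$, and your conclusion then rests on the assertion that the family of eigenschemes is $PGL_3$-invariant --- true (column operations restore the first row to $(x,y,z)$, or, more intrinsically, eigenschemes are zero loci of sections of $\mathcal T_{\PP^2}(d-3)$), but left unproved in your write-up, and it is doing real work. It can be avoided entirely: since the three generator degrees are equal, the degree-preserving automorphisms of $F_1=R(-d+1)^{3}$ are exactly the constant invertible $3\times 3$ matrices, so left multiplication of the Hilbert--Burch matrix by the constant matrix taking $(L_1,L_2,L_3)^{T}$ to $(x,y,z)^{T}$ is a legitimate row operation. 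It produces a Hilbert--Burch matrix of the \emph{same} ideal $I(Z)$, with first column $(x,y,z)^{T}$ and second column a constant linear combination of the $G_i$, still of degree $d-2$; transposing then identifies $I(Z)$ with the eigenscheme ideal of that new triple on the nose, with no coordinate change and no invariance claim needed. This also settles cleanly the ``has the form'' ambiguity you rightly flag: row operations are constant and (for $d\ge 4$) column operations are triangular with respect to the distinct column degrees $d$ and $2d-3$, so the property of having a first column of three independent linear forms is intrinsic to the equivalence class of the matrix.
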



\section{
Conic-line arrangements with a linear Jacobian syzygy}

We start this section with two series of examples of reduced conic-line arrangements. All these examples will play an important role since, as we will see, they are the only examples of reduced conic-line arrangements $C$ in $\PP^2$, whose Jacobian ideal has a linear syzygy.

In what follows we shall use the result \cite[Theorem 3.5]{BC},
due to R. O. Buchweitz and A. Conca,
which determines the Hilbert - Burch matrix of Jacobian schemes admitting a linear syzygy of the type $(ax,by,cz)$ for some coefficients $a,b,c \in \CC$. For completeness, we recall its statement:

\begin{theorem}[Buchweitz - Conca]\label{Conca}
  Let $K$ be a field of characteristic zero and 
  $f \in K[x,y,z]$ a reduced polynomial of degree $d$ in three variables such that $f$ is contained in the ideal of its partial derivatives.
Assume further that there is a triple $(a,b,c)$ of elements of $K$ that are not all zero such that 
$ax\ \partial_x f +by\ \partial _y f + cz\ \partial_z f=0$.

We then have the following possibilities, up to renaming the variables: 
\begin{enumerate}
\item If $abc \neq 0$, then $f$ is a free divisor with Hilbert-Burch matrix
\begin{equation}\label{eq: Hilbert Burch}
\begin{pmatrix} ax & \qquad \left(\frac{1}{c} - \frac{1}{b}\right)\ (d+2)^{-1} \partial_{yz}f \\
by & \qquad \left(\frac{1}{a} - \frac{1}{c}\right)\ (d+2)^{-1}\partial_{xz}f\\
cz & \qquad \left(\frac{1}{b} - \frac{1}{a}\right)\ (d+2)^{-1}\partial_{xy}f\\
\end{pmatrix},
\end{equation}
where $\partial_{**}f$ denotes the corresponding second order derivative of $f$.

\item If $a = 0$, but $bc \neq 0$, then $f$ is a free divisor if, and only if, $\partial_x f\in (y,z)$. If that condition is verified and $\partial_x f = yg + zh$, then $\frac {\partial f_y} {cz} = \frac {- \partial_z f}{by}$ is an element of $K[x,y,z]$, and a Hilbert-Burch matrix is given by
 \begin{equation}
\begin{pmatrix} 0 &  {\partial_y f}/ {cz}  \\
by & {-h}/{c} \\
cz & {g}/{b} \\
\end{pmatrix}.
\end{equation}
\item If $a = b = 0$, then $f$ is independent of $z$ and, so, being the suspension of a reduced plane curve, is a free divisor.

\end{enumerate}
\end{theorem}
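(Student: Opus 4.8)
The plan is to reduce all three cases to a single application of the Hilbert--Burch theorem. Since $C=V(f)$ is reduced, its singular locus $V(J_f)$ is finite, so $J_f$ has codimension $2$ in $R$. Consequently, if in each case I can exhibit a $3\times 2$ matrix $M$ whose two columns are Jacobian syzygies of $f$ and whose $2\times 2$ minors generate $J_f$ up to a common nonzero scalar, then by Hilbert--Burch the ideal $J_f$ is Cohen--Macaulay of codimension $2$, the module ${\rm Syz}(J_f)$ is free of rank $2$ with the columns of $M$ as a basis, and $M$ is the asserted Hilbert--Burch matrix. Thus $f$ is a free divisor and the whole statement follows once, in each case, an appropriate second column is produced; the first column is always the given linear syzygy $(ax,by,cz)$.

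First I would record the two diagonal relations carried by the hypothesis: Euler's identity $x\,\partial_x f+y\,\partial_y f+z\,\partial_z f=df$ (so $f\in J_f$ is automatic in characteristic zero), and the given relation $ax\,\partial_x f+by\,\partial_y f+cz\,\partial_z f=0$, which says the diagonal derivation $\delta=ax\,\partial_x+by\,\partial_y+cz\,\partial_z$ kills $f$. For case $(1)$, $abc\neq 0$, I differentiate $\delta f=0$ with respect to $x$ and combine it with Euler's identity applied to the form $\partial_x f$ to eliminate $\partial_{xx}f$; this yields $ad\,\partial_x f=(a-b)y\,\partial_{xy}f+(a-c)z\,\partial_{xz}f$ together with its two cyclic analogues. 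These three first--order identities are exactly what is needed to check, by direct substitution, that the column built from the mixed second derivatives with coefficients $\left(\tfrac1c-\tfrac1b\right),\left(\tfrac1a-\tfrac1c\right),\left(\tfrac1b-\tfrac1a\right)$ is a genuine syzygy, and that the $2\times2$ minors of the resulting matrix equal a fixed nonzero scalar times $\partial_x f,\partial_y f,\partial_z f$; the normalizing constant is then pinned down by requiring $f=\det M_3$, where $M_3$ adjoins the Euler row $(x,y,z)$ to $M$. Hilbert--Burch closes this case.

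For case $(2)$, $a=0$ and $bc\neq0$: from $by\,\partial_y f=-cz\,\partial_z f$ and $\gcd(y,z)=1$ in the $\mathrm{UFD}$ $R$ I deduce $z\mid\partial_y f$ and $y\mid\partial_z f$, so that $\partial_y f/(cz)$ is already a polynomial. Assuming $\partial_x f=yg+zh$, a short computation shows that the proposed column $(\partial_y f/(cz),\,-h/c,\,g/b)$ annihilates the partials, the cross terms collapsing precisely because $by\,\partial_y f+cz\,\partial_z f=0$, and that its $2\times2$ minors reproduce $\partial_x f,\partial_y f,\partial_z f$; hence $\partial_x f\in(y,z)$ is sufficient for freeness. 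For the converse I argue inside the syzygy module: if $f$ is free with the linear syzygy $(0,by,cz)$, this is (up to scalar) the unique minimal generator in degree one and hence a column of the Hilbert--Burch matrix, so the minor expressing $\partial_x f$ has the shape $y\cdot(\,\cdot\,)-z\cdot(\,\cdot\,)$, forcing $\partial_x f\in(y,z)$. Case $(3)$, $a=b=0$, is immediate: $c\neq0$ gives $\partial_z f=0$, so $f\in\CC[x,y]$, and for a reduced binary form $\partial_x f,\partial_y f$ is a regular sequence, making $J_f$ a codimension--two complete intersection, so $f$ is a free divisor.

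I expect the computational heart of case $(1)$ --- deriving the three first--order identities and then verifying both the syzygy relation and the minor identities while correctly fixing the normalizing scalar --- to be the main obstacle. The remaining cases are essentially divisibility bookkeeping together with the bidirectional reading of the Hilbert--Burch matrix in case $(2)$.
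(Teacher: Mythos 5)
The paper does not prove Theorem \ref{Conca} at all: it is quoted from Buchweitz--Conca \cite[Theorem 3.5]{BC} and explicitly recalled ``for completeness'', so there is no internal proof to compare yours with; it must be judged on its own merits, and on those merits it is essentially correct. The computational core works: differentiating $ax\,\partial_x f+by\,\partial_y f+cz\,\partial_z f=0$ with respect to $x$ and eliminating $\partial_{xx}f$ via Euler's identity for $\partial_x f$ does give $ad\,\partial_x f=(a-b)y\,\partial_{xy}f+(a-c)z\,\partial_{xz}f$ and its cyclic analogues, and these identities show that the signed $2\times 2$ minors of the displayed matrix equal a common nonzero scalar times $(\partial_x f,\partial_y f,\partial_z f)$, so $I_2(M)=J_f$; the converse direction of Hilbert--Burch then yields freeness and exhibits $M$ as a Hilbert--Burch matrix, and the second column being a syzygy is automatic (the vector of signed minors is orthogonal to each column). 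Cases (2) and (3) also check out; in the converse of (2) the cleanest form of your minor argument is that once the first column is $(0,by,cz)$, \emph{every} $2\times 2$ minor lies in $(y,z)$, hence $\partial_x f\in J_f\subseteq (y,z)$ --- though this step needs the degree-one syzygy to be a minimal generator of ${\rm Syz}(J_f)$, which holds as soon as the three partials are linearly independent (a harmless reduction for $d\ge 3$).

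Two inaccuracies should be repaired, neither fatal. First, reducedness of $C$ gives only $\codim J_f\ge 2$; Hilbert--Burch needs equality, i.e.\ that $C$ is singular. This does not come for free from reducedness, but it does follow: a nontrivial linear syzygy with $d\ge 3$ is incompatible with the partials forming a regular sequence (a smooth curve has only Koszul syzygies, of degree $d-1$), or alternatively the Eagon--Northcott height bound gives $\codim I_2(M)\le 2$ once $J_f=I_2(M)$ is established; the cases $d\le 2$ must be checked by hand. Second, your normalization claim is arithmetically off: with the factor $(d+2)^{-1}$ the minors come out to $\tfrac{d}{d+2}\,(\partial_x f,-\partial_y f,\partial_z f)$ and the bordered determinant is $\tfrac{d^2}{d+2}\,f$, so requiring $f=\det M_3$ would force the constant $d^{-2}$, not $(d+2)^{-1}$. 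This is immaterial for the statement --- a Hilbert--Burch matrix is only defined up to rescaling its columns, so the matrix as printed is one --- but you should either drop the ``pinning down'' remark or note that the printed constant is a convention inherited from \cite{BC} (whose degree parameter refers to the second-order partials, of degree $\deg f-2$) rather than something your criterion reproduces.
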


We focus now on conic-line arrangements.
The first series of examples corresponds to reduced conic-line arrangements $C=V(f)$ of degree $d$ with ${\rm mrd}(f)=1$ and maximal Tjurina number $\tau(C)=(d-1)(d-2)+1=d^2-3d+3$.

\begin{example}\label{tau_max} 
\begin{enumerate}
    \item We fix an integer $d\ge 3$. Let ${\mathcal L}$ be a line arrangement with $d-1$ lines through a point $p$, and one other line in general position. Without loss of generality we can assume that $p=(0:0:1)$ and that the general line is $V(x)$, so that the equation of the line arrangement ${\mathcal L}$ is given by
    \begin{multicols}{2}
$$
{\mathcal L}: \ z\prod _{i=1}^{d-1}(a_ix+b_iy)=0
$$
\bigskip 
\begin{center}
\begin{tikzpicture}
 \draw (0,-1.5) -- (0,1.5);
 \draw (-1,-1) -- (1.5,1.5);
 \draw (1,-1) -- (-1.5,1.5);
 \draw (-1.3,-0.5) -- (3,1.154);
 \draw (1.3,-0.5) -- (-3,1.154);
 \draw (-3.5,0.8) -- (3.5,0.8);
\end{tikzpicture}
\end{center}
\end{multicols}
\noindent with $(a_i:b_i)\ne (a_j:b_j)$ for $i\ne j$. 
It is simple to determine a linear syzygy between the three partials of $f$, by observing that $\partial_z f=\prod _{i=1}^{d-1}(a_ix+b_iy)$. So, we have
$$
f= z \ \partial_z f.
$$
On the other hand, by Euler formula we also have $ f=\frac {1}{d} (x \ \partial_x f +y \partial_y f +z\ \partial_x f)$, hence we get the identity
$$
x\ \partial_x f +y \ \partial_y f +(1-d)z\ \partial_x f =0.
$$
Therefore, according to Theorem \ref{Conca}, (1), the Hilbert-Burch matrix of
$J_f$ is given by
\begin{equation}\label{eq: Hilbert Burch L}
\begin{pmatrix} x & \frac{1}{(1-d)} \ \partial_{yz}f \\
y & \frac {1}{(d-1)}\partial_{xz}f\\
(1-d)z & 0\\
\end{pmatrix},
\end{equation}
a minimal free $R$-resolution of $J_f$ is given by:
 $$
0 \longrightarrow R(-d)\oplus R(-2d+3) \longrightarrow R(-d+1)^3 \longrightarrow J_f \longrightarrow 0, 
$$
and $\sL$ is free with exponents $(1,d-2)$
and global Tjurina number $d^2-3d+3$.

It is worthwhile to point out that the 3 entries 
$(x,y,(1-d)z)\in {\rm Syz}(J_f)_1$ of the linear syzygy are linearly independent.

\item We fix an even integer $d=2m\ge 4$. Let ${\mathcal C}_1$ be a conic arrangement with $m$ conics $C_1,\dots , C_m$ such that there exists a point $p\in \PP^2$, for all $i,j$, $1\le i<j\le m$, satisfying $C_i\cap C_j=\{p\}$, and the intersection point $p$ is a singularity $A_7$ for $C_i\cup C_j$. In other words, the $m$ conics belong to a hyperosculating pencil; such a curve is called an {\it even P\l oski curve} in \cite{Sh}. Without loss of generality, we can assume that $p=(0:0:1)$ and the equation of the conic arrangement ${\mathcal C}_1$ is given by
\begin{multicols}{2}
$$
{\mathcal C}_1: \ f=\prod _{i=1}^m(x^2 + a_i (xz + y^2)) = 0, 
$$
\bigskip 
\begin{center}
\begin{tikzpicture}
 \draw (2,0) ellipse (1.5 and 1);
 \draw (1.8,0) ellipse (1.3 and 0.9);
 \draw (1.5,0) ellipse (1 and 0.7);
 \draw (1.2,0) ellipse (0.7 and 0.5);
 \draw (1,0) ellipse (0.5 and 0.4);
\end{tikzpicture}
\end{center}
\end{multicols}

\noindent with $a_i\ne 0$ and $a_i\ne a_j$ for $i\ne j$. The reduced plane curve 
$\sC_1$ has degree $d=2m$. A linear Jacobian syzygy can be determined by observing that $(0,x,-2y)$ is a linear syzygy of the Jacobian ideal of $f_i:=x^2 + a_i (xz + y^2)$, for any $i=1,\dots ,m$, so we deduce that ${\rm Syz}(J_f)_1$ is also generated by $(0,x,-2y)$.

Moreover, we claim that 
$\sC_1$ is free with exponents $(1,d-2)$, so that the global Tjurina number is $d^2 - 3d+3$.

To prove the claim, observe that since $r=1$, the conic arrangement $\sC_1$ is either free or nearly free, and by \eqref{free} and \eqref{nearly free}, it is free if and only if $J_f$ admits a sygyzy of degree $d-2$, which is not proportional to $(0,x,-2y)$.
Let us prove the latter fact by induction on the number $m$ of conics.

If $m=2$, a degree $2$ syzygy, which is not proportional to $(0,x,-2y)$, is given by
$$
\qquad \qquad
(-(a_1 +a_2)x^2-2a_1 a_2(y^2 + xz), a_1 a_2 yz,
4x^2 +2(a_1 +a_2)y^2 +3(a_1+a_2) x z +2a_1 a_2 z^2).
$$
Now assume that $m \ge 3$, and that any conic arrangement of $m-1$ conics belonging to a hyperosculating pencil admits a syzygy of degree $2m-4$, not proportional to $(0,x,-2y)$.
Let $f=\prod _{i=1}^m(x^2 + a_i (xz + y^2))$, and set 
$$
f_1 = \prod _{i=1}^{m-1}(x^2 + a_i (xz + y^2)), \qquad
f_2 = x^2 + a_m (xz + y^2).
$$
By induction hypothesis
$J_{f_1}$ admits a syzygy $\delta_1 \in {\rm Syz}(J_{f_1})_{2m-4}$, with $\delta_1 \not \in \langle
(0,x,-2y)\rangle$, and by Theorem \ref{r_union}, (1), 
we have
$$
\delta= f_2 \delta_1 - \frac{1}{2m} (\delta_1 \cdot 
\nabla f_2)
\ E \in {\rm Syz}(J_{f})_{2m-2},
$$
where we set $\delta_1 \cdot \nabla f_2= h_1 \partial_x f_2+ h_2 \partial_y f_2+ h_3 \partial_z f_2$, and $E=(x,y,z)$ is the Euler relation.
As observed in the proof of such a Theorem (see \cite[Theorem 5.1]{DIS}), since $\delta_1 \neq 0$, it is also $\delta \neq 0$. Finally, we claim that $\delta \not \in \langle
(0,x,-2y)\rangle$. Indeed, if $\delta_1 \cdot 
\nabla f_2=0$, we have $\delta= f_2 \delta_1$ and since
$\delta_1 \not \in \langle
(0,x,-2y)\rangle$ by induction hypothesis, the claim follows. If $\delta_1 \cdot 
\nabla f_2 \neq 0$, we see that 
$$
\delta \cdot \nabla f_1 = f_2 \delta_1 \cdot \nabla f_1- \frac{1}{2m} (\delta_1 \cdot 
\nabla f_2)
\ E \cdot \nabla f_1 = - \frac{2m-2}{2m}  (\delta_1 \cdot 
\nabla f_2) f_1\neq 0.
$$
On the other hand, if we had $\delta = h\ (0,x,-2y)$
for some polynomial $h$,
we would have
$$
\delta \cdot \nabla f_1 =h\ (0,x,-2y) \cdot \nabla f_1=0,
$$
as $(0,x,-2y) \in {\rm Syz} (J_{f_1})$.

 It is important to point out that in this case the linear syzygy of $J_f$ has only 2 linearly independent entries, and that such an example is not of the type considered in Buchweitz - Conca Theorem \ref{Conca}. 

\item  We fix an odd integer $d=2m+1\ge 5$. Let ${\mathcal C}{\mathcal L}_1$ be a conic-line arrangement with $m$ conics $C_1,\dots , C_m$ and a line $\ell $ such that there exists a point $p\in \PP^2$, $\ell $ is a common tangent line to all $C_i$'s, $C_i\cap C_j=\{p\}$ and the intersection point $p$ is a singularity $A_7$ for $C_i\cup C_j$, for all $i,j$, $1\le i<j\le m$. In other words, the conics belong to a hypersculating pencil; such a curve is called an {\it odd P\l oski curve} in \cite{Sh}.
 Without loss of generality we can assume that $p=(0:0:1)$, the line
 $\ell=V(x)$ and the equation of the conic-line arrangement ${\mathcal C}{\mathcal L}_1$ is given by
\begin{multicols}{2}
$$
{\mathcal C}{\mathcal L}_1: \ f=x\prod _{i=1}^m(x^2 + a_i (xz + y^2)) = 0
$$
\bigskip
\begin{center}
\begin{tikzpicture}
 \draw (2,0) ellipse (1.5 and 1);
 \draw (1.8,0) ellipse (1.3 and 0.9);
 \draw (1.5,0) ellipse (1 and 0.7);
 \draw (1.2,0) ellipse (0.7 and 0.5);
 \draw (1,0) ellipse (0.5 and 0.4);
 \draw (0.5,-1) -- (0.5,1);
\end{tikzpicture}
\end{center}
\end{multicols}
\noindent with $a_i\ne 0$ and $a_i\ne a_j$ for $i\ne j$, and $a_i \neq a_j$ if $1\neq j$. The reduced plane curve $\sC \sL_1$ has degree $d=2m+1$; by using the same argument as in the previous example, it is not difficult to see that $\sC \sL_1$ is free with exponents $(1,d-2)$
 and global Tjurina number $d^2-3d+3$. The linear syzygy of ${\rm Syz}(J_f)_1$ is generated by $(0,x,-2y)$. So, again the linear syzygy of the Jacobian ideal of $f$ has only two independent entries, and is not of the type considered in Buchweitz - Conca Theorem \ref{Conca}.

\item  We fix an odd integer $d=2m+1\ge 5$. Let ${\mathcal C}{\mathcal L}_2$ be a conic-line arrangement with $m$ conics $C_1,\dots , C_m$ and a line $\ell $ such that there exist two points $p, q\in \PP^2$ such that $C_i\cap C_j=\{p, q\}$ and the two intersection points $p, q$ are tacnodes for $C_i\cup C_j$, for all $i,j$, $1\le i<j\le m$, and $\ell $ is a common tangent line to all $C_i$'s at $p$. Without loss of generality we can assume that $p=(0:0:1)$, $q=(1:0:0)$, $\ell=V(x)$, so that the equation of the conic-line arrangement ${\mathcal C}{\mathcal L}_2$ is given by
\begin{multicols}{2}
$$
{\mathcal C}{\mathcal L}_2: 
\ f=x\prod _{i=1}^m(xz + a_iy^2) = 0
$$
\bigskip 
\begin{center}
\begin{tikzpicture}
 \draw (2,0) ellipse (1.5 and 1.3);
 \draw (2,0) ellipse (1.5 and 1.1);
 \draw (2,0) ellipse (1.5 and 0.9);
 \draw (2,0) ellipse (1.5 and 0.7);
 \draw (2,0) ellipse (1.5 and 0.5);
 \draw (0.5,-1.3) -- (0.5,1.3);
\end{tikzpicture}
\end{center}
\end{multicols}
\noindent with $a_i\ne 0$ for all $i$, $1\le i \le m$, and $a_i \neq a_j$ if $1\neq j$. The reduced singular plane curve $\sC \sL_2$ has degree $d=2m+1$. We claim that $Syz(J_f)_1=\langle ((d-1)x, -y, -(d+1)z)\rangle$.
 Indeed, set $q_i(x,y,z)= xz + a_iy^2$. We have
 $$
 \partial_x f = \prod _{i=1}^m q_i + xz \left(  \sum _{j=1}^m \prod _{i=1, i\neq j}^m  q_i\right), \ \partial_y f=2xy  \left(  \sum _{j=1}^m a_j\prod _{i=1, i\neq j}^m  q_i\right),        \ \partial_z f=
 x^2 \left(  \sum _{j=1}^m \prod _{i=1, i\neq j}^m  q_i\right),
 $$
 which, in particular, gives 
 $$
 f=x \ \partial_x f - z \ \partial_z f.
 $$
 Thus by the Euler identity we get
 $$
 (d-1)x \ \partial_x f - y \ \partial_y f -(d+1)z \ \partial_z f=0.
 $$
 Therefore, the linear syzygy of the Jacobian ideal of $f$ has again 3 linear independent entries. 
 
Therefore, according to Theorem \ref{Conca}, (1), the curve $\sC \sL_2$ is free with global Tjurina number $d^2-3d+3$, and the Hilbert-Burch matrix of $J_f$ is 
 \begin{equation}\label{eq: Hilbert Burch of $CL$}
\begin{pmatrix} (d-1)x & \frac{1}{(d+1)} \ \partial_{yz}f \\
-y & \frac {2}{(d^2-1)}\partial_{xz}f\\
-(d+1)z & -\frac {1}{(d-1)} \partial_{xy} f\\
\end{pmatrix} .
 \end{equation}

 \item The following three examples are of the type given in \cite[Example 3.7]{BC}, and they all correspond to free curves of exponents $(1,d-2)$ and ${\rm Syz}(J_f)_1$ generated by $(x,0,-z)$.
 
 \begin{itemize}
 \item Let $d=2m+2\ge 6$ and let ${\mathcal C}{\mathcal L}_3$ be a conic-line arrangement
 of degree $d$ with $m$ conics $C_1,\dots, C_m$ and two lines $\ell _1$ and $\ell _2$ such that there exist two points $p, q\in \PP^2$ satisfying $C_i\cap C_j=\{p, q\}$ and the two intersection points $p, q$ are tacnodes for $C_i\cup C_j$, for all $i,j$, $1\le i<j\le m$, $\ell _1$ is a common tangent line to all $C_i$'s at $p$, $\ell _2$ is a common tangent line to all $C_i$'s at $q$. Without loss of generality we can assume that $p=(0:0:1)$, $q=(1:0:0)$, $\ell_1=V(x)$, $\ell_2=V(z)$, so that the equation of the conic-line arrangement ${\mathcal C}{\mathcal L}_3$ is given by
\begin{multicols}{2}
$$
{\mathcal C}{\mathcal L}_3: \ f=xz\prod _{i=1}^m(xz + a_iy^2)) = 0
$$
\bigskip
\begin{center}
\begin{tikzpicture}
 \draw (2,0) ellipse (1.5 and 1.3);
 \draw (2,0) ellipse (1.5 and 1.1);
 \draw (2,0) ellipse (1.5 and 0.9);
 \draw (2,0) ellipse (1.5 and 0.7);
 \draw (2,0) ellipse (1.5 and 0.5);
 \draw (0.5,-1.3) -- (0.5,1.3);
 \draw (3.5,-1.3) -- (3.5,1.3);
\end{tikzpicture}
\end{center}
\end{multicols}
\noindent with $a_i\ne 0$ for all $i$, $1\le i \le m$, and $a_i \neq a_j$ if $1\neq j$.

\item Let $d=2m+2\ge 6$ and let ${\mathcal C}{\mathcal L}_4$ be a conic-line arrangement of degree $d$ with $m$ conics $C_1,\dots, C_m$ and two lines $\ell _1$ and $\ell _2$ such that there exist two points $p, q\in \PP^2$ satisfying $C_i\cap C_j=\{p, q\}$ and the two intersection points $p, q$ are tacnodes for $C_i\cup C_j$, for all $i,j$, $1\le i<j\le m$, $\ell _1$ is a common tangent line to all $C_i$'s at $p$, $\ell _2$ is the line joining $p$ and $q$. Without loss of generality we can assume that $p=(0:0:1)$, $q=(1:0:0)$, $\ell_1=V(x)$, $\ell_2=V(y)$, so that the equation of the conic-line arrangement ${\mathcal C}{\mathcal L}_4$ is given by
\begin{multicols}{2}
$$
{\mathcal C}{\mathcal L}_4: \ f=xy\prod _{i=1}^m(xz + a_iy^2)) = 0
$$
\bigskip
\begin{center}
\begin{tikzpicture}
 \draw (2,0) ellipse (1.5 and 1.3);
 \draw (2,0) ellipse (1.5 and 1.1);
 \draw (2,0) ellipse (1.5 and 0.9);
 \draw (2,0) ellipse (1.5 and 0.7);
 \draw (2,0) ellipse (1.5 and 0.5);
 \draw (0.5,-1.3) -- (0.5,1.3);
 \draw (0,0) -- (4,0);
\end{tikzpicture}
\end{center}
\end{multicols}
\noindent with $a_i\ne 0$ for all $i$, $1\le i \le m$, and $a_i \neq a_j$ if $1\neq j$.
  
 \item Let $d=2m+3\ge 6$ and let ${\mathcal C}{\mathcal L}_5$ be a conic-line arrangement 
 of degree $d$ with $m$ conics $C_1,\dots, C_m$ and three lines $\ell _1$, $\ell_2$ and $\ell_3$ such that there exist two points $p, q\in \PP^2$ satisfying $C_i\cap C_j=\{p, q\}$ and the two intersection points $p, q$ are tacnodes for $C_i\cup C_j$, for all $i,j$, $1\le i<j\le m$, $\ell _1$ is a common tangent line to all $C_i$'s at $p$, $\ell _2$ is a common tangent line to all $C_i$'s at $q$, $ell_3$ is the line joining $p$ and $q$. Without loss of generality we can assume that $p=(0:0:1)$, $q=(1:0:0)$, $\ell_1=V(x)$, $\ell_2=V(z)$ and $\ell_3 = V(y)$, so that the equation of the conic-line arrangement ${\mathcal C}{\mathcal L}_5$ is given by
\begin{multicols}{2}
$$
{\mathcal C}{\mathcal L}_5: \ f=xyz\prod _{i=1}^m(xz + a_iy^2)) = 0
$$
\bigskip
\begin{center}
\begin{tikzpicture}
 \draw (2,0) ellipse (1.5 and 1.3);
 \draw (2,0) ellipse (1.5 and 1.1);
 \draw (2,0) ellipse (1.5 and 0.9);
 \draw (2,0) ellipse (1.5 and 0.7);
 \draw (2,0) ellipse (1.5 and 0.5);
 \draw (0.5,-1.3) -- (0.5,1.3);
 \draw (3.5,-1.3) -- (3.5,1.3);
 \draw (0,0) -- (4,0);
\end{tikzpicture}
\end{center}
\end{multicols}
\noindent with $a_i\ne 0$ for all $i$, $1\le i \le m$, and $a_i \neq a_j$ if $1\neq j$.

\end{itemize}
 \end{enumerate}
 \end{example}

\bigskip
Next series of example corresponds to reduced nearly free plane curves $C=V(f)$ of degree $d$ with ${\rm mrd}(f)=1$ and $\tau(C)=d^2-3d+2$.

\begin{example}\label{mintau}
By \cite[Example 3.7]{BC}, next examples have the property that ${\rm Syz}(J_f)_1$ is generated by $(x,0,-z)$ and they are not free.
Since $r=1$, we have $\tau(C)=(d-1)(d-2)$ and they are nearly free by Lemma \ref{reduciblecomponents}.
\begin{enumerate} 
\item  Let ${\mathcal C}_2$ be a conic arrangement with $m$ conics 
 $C_1,\dots, C_m$ such that there exist two points $p, q\in \PP^2$ satisfying $C_i\cap C_j=\{p, q\}$ and the two intersection points $p, q$ are tacnodes for $C_i\cup C_j$, for all $i,j$, $1\le i<j\le m$. 
 
 Without loss of generality we can assume that $p=(0:0:1)$, $q=(1:0:0)$, so that the equation of the conic arrangement is
 \begin{multicols}{2}
$$
{\mathcal C}_2: 
\ f=\prod _{i=1}^m(xz + a_iy^2) = 0
$$
\bigskip
\begin{center}
\begin{tikzpicture}
 \draw (2,0) ellipse (1.5 and 1.3);
 \draw (2,0) ellipse (1.5 and 1.1);
 \draw (2,0) ellipse (1.5 and 0.9);
 \draw (2,0) ellipse (1.5 and 0.7);
 \draw (2,0) ellipse (1.5 and 0.5);
\end{tikzpicture}
\end{center}
\end{multicols}
\noindent with $a_i\ne 0$ for all $i$, $1\le i \le m$, and $a_i \neq a_j$ if $1\neq j$.


\item Let ${\mathcal C}{\mathcal L}_6$ be a conic-line arrangement with $m$ conics $C_1,\dots, C_m$ and a line $\ell$ such that there exist two points $p, q\in \PP^2$ such that $C_i\cap C_j=\{p, q\}$ and the two intersection points $p, q$ are tacnodes for $C_i\cup C_j$, for all $i,j$, $1\le i<j\le m$, and $\ell $ is the line through $p$ and $q$.
 We can assume that $p=(0:0:1)$, $q=(1:0:0)$, $\ell =V(y)$ and the equation of the conic-line arrangement is
 \begin{multicols}{2}
$$
{\mathcal C}{\mathcal L}_6: 
\ f=y\prod _{i=1}^m(xz + a_iy^2) = 0
$$
\bigskip
\begin{center}
\begin{tikzpicture}
 \draw (2,0) ellipse (1.5 and 1.3);
 \draw (2,0) ellipse (1.5 and 1.1);
 \draw (2,0) ellipse (1.5 and 0.9);
 \draw (2,0) ellipse (1.5 and 0.7);
 \draw (2,0) ellipse (1.5 and 0.5);
 \draw (-0.5,0) -- (4.5,0);
\end{tikzpicture}
\end{center}
\end{multicols}
\noindent with $a_i\ne 0$ for all $i$, $1\le i \le m$, and $a_i \neq a_j$ if $i \neq j$.

 \end{enumerate}
\end{example}

Our next goal is to establish a geometric characterization of all reduced conic-line arrangements $C=V(f)$ in $\PP^2$ of degree $d\ge 3$, whose Jacobian ideal $J_f$ has a linear syzygy, i.e., ${\rm mrd}(f)=1$.

\begin{lemma}\label{reduciblecomponents}
Let $C=V(f)$ be a reduced singular curve in $\PP^2$ of degree $d\ge 3$ and let $J_f=(f_x,f_y,f_z)$ be its Jacobian ideal. Assume that ${\rm mrd}(f)=1$. Then
$d^2-3d+2 \le \tau(C)\le d^2-3d+3$. Moreover, if $\tau(C)= d^2-3d+3$ (resp. $\tau(C)= d^2-3d+2$) then $C$ is free (nearly free).
\end{lemma}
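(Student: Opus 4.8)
The bounds and the upper‑extreme statement are immediate from Proposition \ref{bounds_tau}. Setting $r={\rm mrd}(f)=1$ in the inequalities there gives
$$(d-1)(d-2)\le \tau(C)\le (d-1)(d-2)+1,$$
that is $d^2-3d+2\le \tau(C)\le d^2-3d+3$, and since $\tau(C)$ is an integer only the two extreme values occur. If $\tau(C)=d^2-3d+3=(d-1)(d-r-1)+r^2$, then the unconditional implication of Proposition \ref{bounds_tau} (the direction $\tau(C)=(d-1)(d-r-1)+r^2\Rightarrow C$ free requires no hypothesis on $d$) yields that $C$ is free, necessarily with exponents $(1,d-2)$.

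The substantive point is that $\tau(C)=d^2-3d+2$ forces $C$ to be nearly free. My plan is to argue through the logarithmic bundle $E:=\sT\langle C\rangle$, the kernel of $\sO_{\PP^2}(1)^{\oplus 3}\to \sJ_f(d)$. The image of this map is the torsion‑free sheaf $\sJ_{\Sigma_f}(d)$, so $E$ is reflexive of rank $2$ on the smooth surface $\PP^2$, hence locally free. From
$$0\longrightarrow E\longrightarrow \sO_{\PP^2}(1)^{\oplus 3}\longrightarrow \sJ_{\Sigma_f}(d)\longrightarrow 0$$
a Chern class computation gives $c_1(E)=3-d$ and $c_2(E)=(d^2-3d+3)-\tau(C)$. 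In this normalization a global section of $E$ is precisely a linear Jacobian syzygy, so ${\rm mrd}(f)=1$ means $H^0(E)\ne 0$ while $H^0(E(-1))=0$. Fix $0\ne s\in H^0(E)$. If $s$ vanished on a divisor of degree $e\ge 1$, then dividing out its equation would produce a nonzero section of $E(-e)$, i.e.\ a Jacobian syzygy of degree $1-e\le 0$, contradicting ${\rm mrd}(f)=1$; hence $s$ vanishes on a $0$‑dimensional scheme $W$ of length $c_2(E)$, giving a Bourbaki‑type sequence
$$0\longrightarrow \sO_{\PP^2}\longrightarrow E\longrightarrow \sJ_W(3-d)\longrightarrow 0.$$
When $\tau(C)=d^2-3d+2$ this yields $\deg W=c_2(E)=1$, so $W$ is a single reduced point.

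To finish I would read off nearly‑freeness from this sequence. Since $W\ne\varnothing$, the quotient $\sJ_W(3-d)$ is not locally free, so the extension cannot split; thus $E$ is a nontrivial extension of $\sJ_{\{p\}}(3-d)$ by $\sO_{\PP^2}$. Splicing the twisted Koszul resolution of the point with this extension produces the minimal free resolution of $E$, hence of ${\rm Syz}(J_f)$, and matching degrees shows it has exactly the shape \eqref{nearly free} with exponents $(d_1,d_2)=(1,d-1)$, which is the definition of a nearly free curve. Equivalently, one may invoke the known characterization that, for fixed ${\rm mrd}(f)=r$, the value $\tau(C)=(d-1)(d-r-1)+r^2-1$ singles out the nearly free curves. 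The main obstacle is precisely this last identification: one must check that the non‑split extension with $\deg W=1$ is forced to be locally free and to give the nearly‑free Betti table rather than some a priori different three‑syzygy configuration. The decisive inputs are the local‑freeness of $E$ on $\PP^2$ and the fact that a length‑one vanishing scheme contributes a single new generator in degree $d-1$ appearing with multiplicity two.
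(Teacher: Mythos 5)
Your proof is correct, and on the substantive point it takes a genuinely different route from the paper. The paper's proof is purely by citation: the bounds come from Proposition \ref{bounds_tau}, and both implications ($\tau(C)=d^2-3d+3$ implies free, $\tau(C)=d^2-3d+2$ implies nearly free) are quoted from \cite[Theorem 1.2]{D} (see also \cite[Proposition 26]{ellia}). You handle the bounds and the free case the same way, but instead of citing \cite{D} for the nearly free case you reprove its $r=1$ instance directly: local freeness of $E=\sT\langle C\rangle$ (kernel of a map from a locally free sheaf to a torsion-free sheaf is reflexive, hence locally free on a smooth surface), the computation $c_1(E)=3-d$, $c_2(E)=d^2-3d+3-\tau(C)$, the Bourbaki sequence $0\to\sO_{\PP^2}\to E\to\sJ_W(3-d)\to 0$ coming from a section of minimal degree, and splicing with the Koszul resolution of the length-one scheme $W$. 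This is sound, and the two worries in your closing paragraph are already settled by your own setup: local freeness of $E$ was established before the extension enters, so nothing about the extension needs to ``force'' it; and the spliced resolution, which in the grading convention of \eqref{nearly free} reads $0\to R(1-2d)\to R(-d)\oplus R(2-2d)^2\to {\rm Syz}(J_f)\to 0$, is automatically minimal because its three twists $1-2d$, $-d$, $2-2d$ are pairwise distinct, so no cancellation of summands is possible; this is precisely the nearly free shape with exponents $(d_1,d_2)=(1,d-1)$, $d_1+d_2=d$. Two steps worth making explicit if you write this up: the identification of ${\rm Syz}(J_f)$ (up to the degree shift implicit in \eqref{nearly free}) with $\Gamma_*(E)$ uses that ${\rm Syz}(J_f)$ is a second syzygy module, hence saturated; and the right-exactness of $0\to R\to\Gamma_*(E)\to I_W(3-d)\to 0$ uses $H^1_*(\sO_{\PP^2})=0$. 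The trade-off: your argument is self-contained and exhibits the exponents and the Betti table explicitly, while the paper's citation is shorter and covers all values of ${\rm mrd}(f)$, not just $r=1$.
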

\begin{proof} By Proposition \ref{bounds_tau} we have $d^2-3d+2 \le \tau(C)\le d^2-3d+3$. By \cite[Theorem 1.2]{D} (see also \cite[Proposition 26]{ellia}), if $\tau(C)= d^2-3d+3$ (respectively $d^2-3d+2$) then $C$ is free (nearly free).
\end{proof}

\subsection{Free conic-line arrangements with
a linear Jacobian syzygy}

Our first goal is to classify free conic-line arrangements $C=V(f)$ of degree $d$ in $\PP^2$ with $r={\rm mrd}(f)=1$; such curves have maximum Tjurina number $\tau (C)=d^2-3d+3$. We have

\begin{theorem}\label{prop: maxtau}
Let $C=V(f)$ be a conic-line arrangement in $\PP^2$ of degree $d\ge 5$. Then, $\tau(C)=d^2-3d+3$ if and only if $C$ is either a line arrangement ${\mathcal L}$ as in example \ref{tau_max}(1), or a conic arrangement ${\mathcal C}_1$ as in example \ref{tau_max}(2), or a conic-line arrangement ${\mathcal C}{\mathcal L}_1$, ${\mathcal C}{\mathcal L}_2$, ${\mathcal C}{\mathcal L}_3$, ${\mathcal C}{\mathcal L}_4$ or ${\mathcal C}{\mathcal L}_5$ as in examples \ref{tau_max}(3)-(7).
\end{theorem}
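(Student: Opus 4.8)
The \emph{if} direction is already contained in Example \ref{tau_max}, where each of the seven families is exhibited as a free curve of exponents $(1,d-2)$, hence with $\tau(C)=d^2-3d+3$. So the work is entirely in the \emph{only if} direction, and the first step is to fix the homological type of $C$. Set $g(r):=(d-1)(d-r-1)+r^2=(d-1)^2-(d-1)r+r^2$, an upward parabola in $r$ symmetric about $(d-1)/2$ with $g(1)=g(d-2)=d^2-3d+3$. By Proposition \ref{bounds_tau} one has $\tau(C)\le g(r)$ for $r=\mathrm{mrd}(f)$, so $g(r)\ge g(1)$ forces $r\le 1$ or $r\ge d-2$. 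A reduced conic-line arrangement of degree $d\ge 5$ has at least two components, hence is singular (its partials are not a regular sequence), so $\mathrm{mrd}(f)\le d-2$; thus $r\in\{1,d-2\}$. If $r=d-2$ then $\tau(C)=g(d-2)$, so $C$ is free by Proposition \ref{bounds_tau}, with exponents $(d_1,d_2)$, $d_1\le d_2$, $d_1+d_2=d-1$ and $\mathrm{mrd}=d_1=d-2$; this gives $d_2=1<d_1$, impossible for $d\ge 5$. Hence $r=1$, and by Lemma \ref{reduciblecomponents} the curve $C$ is free with exponents $(1,d-2)$.

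The core of the argument is to exploit the linear syzygy geometrically. A nonzero $\delta=a\partial_x+b\partial_y+c\partial_z\in\mathrm{Syz}(J_f)_1$, with $a,b,c$ linear forms, is a linear vector field $\delta=\sum_{i,j}A_{ij}x_j\partial_{x_i}$ for some $A\in\mathfrak{gl}_3(\CC)$, and $\delta f=0$ means $V(f)$ is invariant under the one-parameter group $\{e^{tA}\}$. Writing $f=\ell_1\cdots\ell_s\,Q_1\cdots Q_t$ as the product of its line and irreducible-conic factors and applying Theorem \ref{r_union}(2) one factor at a time, the same $\delta$ lies—modulo the Euler derivation—in $D_0$ of every irreducible factor; equivalently, since $\{e^{tA}\}$ is connected it fixes each component, so every $\ell_i$ is an eigenform and every $Q_j$ an eigen-quadric of $A$. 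I would then classify $A$ up to conjugation and scalar shift (that is, in $\mathfrak{pgl}_3$) by Jordan type, and for each type read off the invariant lines from the eigenforms on $\CC^3$ and the invariant irreducible conics from the eigen-quadrics on $\mathrm{Sym}^2\CC^3$, by direct weight computations.

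For $d\ge 5$ only three Jordan types survive. When $A$ is diagonalizable with a repeated eigenvalue, the invariant lines are those through a fixed point together with one further fixed line, there is no invariant irreducible conic, and one obtains the line arrangement $\mathcal{L}$. When $A$ is a regular nilpotent (a single size-$3$ block), the invariant irreducible conics form a hyperosculating pencil and the unique invariant line is the tangent at its base point, producing $\mathcal{C}_1$ and $\mathcal{CL}_1$. When $A$ is diagonalizable with distinct eigenvalues in arithmetic progression, the invariant irreducible conics form a bitangent pencil $\{xz+\lambda y^2\}$ whose invariant lines are the two tangents $V(x),V(z)$ and the chord $V(y)$. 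Every other type—distinct non-arithmetic eigenvalues, the sub-regular nilpotent, a size-$2$ block with a third eigenvalue, and the scalar case (which reduces to the Euler field and gives no syzygy)—leaves only a triangle of lines, or only concurrent lines, or at most two lines, hence cannot form a reduced degree-$d$ arrangement with $\mathrm{mrd}(f)=1$, and is discarded. In particular all conics of $C$ lie in a single invariant pencil.

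It remains, in the bitangent-pencil type, to decide which lines among $V(x),V(y),V(z)$ may be adjoined while keeping $C$ free, and this is the main obstacle: it is exactly the step separating the present maximal-$\tau$ list from the minimal-$\tau$ list of Theorem \ref{taumin}. Here I would invoke Theorem \ref{Conca}. Writing the killing syzygy as $(ax,by,cz)$, the middle coefficient $b$ vanishes precisely when both or neither tangent occurs; in the remaining case (exactly one tangent) we are in situation (1) and $C$ is automatically free, while in the vanishing case we are in situation (2), where freeness is equivalent to $\partial_y f\in(x,z)$, i.e. to $x$ or $z$ dividing $f$. Thus $C$ is free if and only if at least one tangent line occurs, which selects exactly $\mathcal{CL}_2,\mathcal{CL}_3,\mathcal{CL}_4,\mathcal{CL}_5$ and discards the conics-only and chord-only arrangements (the nearly free $\mathcal{C}_2,\mathcal{CL}_6$). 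For the nilpotent type Theorem \ref{Conca} does not apply; there both $\mathcal{C}_1$ and $\mathcal{CL}_1$ are free by the explicit inductive syzygy of Example \ref{tau_max}(2). Collecting the three surviving types then yields precisely the seven families, completing the classification.
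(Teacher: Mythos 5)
Your core strategy is sound and genuinely different from the paper's, and apart from one flawed step (discussed below) it yields a correct proof. The paper proves the converse component by component: line arrangements are dispatched by citing \cite{DIM}; for conic arrangements it extracts pairs and triples of conics, invokes the classification of conic pairs with a linear syzygy from \cite{DIS}, and uses Theorem \ref{r_union}(3) together with explicit computations of the spaces $D_0(f_i)_1$ to force all conics into a single bitangent or hyperosculating pencil; the admissible extra lines are found by the same kind of computation, and Theorem \ref{Conca}(2) discards the non-free configuration $y\prod q_i$. You instead globalize the linear syzygy into an infinitesimal symmetry $A\in\mathfrak{gl}_3(\CC)$, observe that every irreducible component must be an eigenform, and classify $A$ by Jordan type up to scalar shift. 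I have checked your weight computations: the only types admitting invariant reduced configurations of degree $d\ge 5$ are the three you list (repeated-eigenvalue diagonalizable, giving $\sL$; regular nilpotent, giving the hyperosculating pencil with or without its tangent line; distinct eigenvalues in arithmetic progression, giving the bitangent pencil plus subsets of the three distinguished lines), and your use of Theorem \ref{Conca} to separate the free cases $\sC\sL_2,\dots,\sC\sL_5$ from the nearly free $\sC_2$, $\sC\sL_6$ matches the paper and Theorem \ref{taumin}. (One small imprecision: in the $b=0$ case the correct statement is that \emph{both} $x$ and $z$ divide $f$, and one must actually compute that $\partial_y f\notin(x,z)$ for $\sC_2$ and $\sC\sL_6$; your final criterion---free iff at least one tangent occurs---is nevertheless correct.) Your route is self-contained, replacing the citations of \cite{DIM} and \cite{DIS} by a uniform treatment of lines and conics; the paper's route avoids any Jordan-form case analysis.

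The genuine flaw is in your reduction to $r=1$, namely the claim that since $C$ is singular ``its partials are not a regular sequence, so ${\rm mrd}(f)\le d-2$.'' That implication is false: a degree-$d$ curve with a single node is singular, yet the lower bound of Proposition \ref{bounds_tau}, $(d-1)(d-1-r)\le \tau(C)=1$, forces ${\rm mrd}(f)=d-1$. Singularity alone only gives ${\rm mrd}(f)\le d-1$ (Koszul syzygies), and $\tau(C)=d^2-3d+3$ combined with the stated bounds of Proposition \ref{bounds_tau} leaves $r\in\{1,\,d-2,\,d-1\}$: your exponents argument correctly eliminates $r=d-2$, but $r=d-1$ survives, since the upper bound there is $(d-1)^2>d^2-3d+3$. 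What saves the argument is reducibility, not singularity: a conic-line arrangement of degree $d\ge 5$ splits as $f=f_1f_2$ with $f_1,f_2$ coprime, and the cross product $\nabla f_1\times\nabla f_2$, whose entries are the $2\times 2$ minors of the two gradients, is a nonzero element of $D_0(f_1)\cap D_0(f_2)\subseteq {\rm Syz}(J_f)$ of degree $(d_1-1)+(d_2-1)=d-2$; hence ${\rm mrd}(f)\le d-2$ for every reducible reduced curve. With this one-line substitution your proof goes through. (To be fair, the paper is equally terse at this point, asserting only that Proposition \ref{bounds_tau} yields ${\rm mrd}(f)=1$; the extra observation above is needed on either route, but your stated justification, as written, is incorrect.)
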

\begin{proof} All conic-line arrangements $C\subset  \PP^2$ described in examples \ref{tau_max}(1)-(7) have total Tjurina number $\tau(C)=d^2-3d+3$ and ${\rm mdr}(f)=1$. Let us prove the converse. The hypothesis $\tau(C)=d^2-3d+3$, $d\ge 5$ and Proposition \ref{bounds_tau} imply that ${\rm mrd}(f)=1$. We distinguish several cases:

\vskip 2mm
\noindent \underline{Case 1:} $C$ is a line arrangement. By \cite[Proposition 4.7(5)]{DIM}, $C$ is the union of $d-1$ lines
through a point $p$, and one other line in general position. 

\vskip 2mm
\noindent \underline{Case 2:} Let $C=\cup _{i=1}^mC_i: \ f=\prod_{i=1}^mf_i=0$ be a conic arrangement. By Theorem \ref{r_union}, (3), whenever we extract the union $C'=C_1 \cup C_2$ of two conics, the relative $r'={\rm mrd}(C')=1$, so
by the classification given in \cite[Proposition 5.5]{DIS}, the only possible cases are:
either $|C_1 \cap C_2|=2$ and the two intersection points are two tacnodes for $C'$, or
$|C_1 \cap C_2|=1$ and the singular point is an $A_7$ singularity (the two conics are hyperosculating). Since in the first case the total Tjurina number is $\tau =6$, it does not occur. This settles the case $d=4$. Assume now $d \ge 6$. 

\vskip 2mm
\noindent {\bf Claim: } Whenever we extract the union $C_{i_1} \cup C_{i_2} \cup C_{i_3}$ of three conics, they belong to the same pencil, so they are either bitangent, or they are hyperosculating.

\noindent {\bf Proof of the Claim.}
Indeed, assume first that $C_{i_1}\cap C_{i_2}=\{p,q\}$ and $p, q$ are two tacnodes for $C_{i_1}\cup C_{i_2}$. Without loss of generality we can assume that 
$$
C_{i_1}=V(f_{i_1})=V(xz+a_{i_1}y^2), \qquad
C_{i_2}=V( f_{i_2})=V(xz+a_{i_2}y^2)
$$
with $a_{i_1},a_{i_2}\in \CC$. Therefore, $D_0(f_{i_1})_1=D_0(f_{i_2})_1\cong {\rm Syz}(f_{i_1}f_{i_2})_1=\langle (x,0,-z)\rangle$. By hypothesis ${\rm mrd}(f_{i_1}f_{i_2}f_{i_3})=1$. So, applying Theorem \ref{r_union}, we have 
$$
D_0(f_{i_3})_1\cap \langle (x,0,-z) \rangle \ne {0},\ {\rm or} \
\alpha (x,0,-z)+\beta (x,y,z)\in D_0(f_{i_3})_1.
$$
A straightforward computation shows that necessarily $C_{i_3}=V( f_{i_3})=V(\lambda xz+\mu y^2)$.

Assume now that $C_{i_1}\cap C_{i_2}=\{p\}$ and $p$ is a singularity $A_7$ for $C_{i_1}\cup C_{i_2}$. Without loss of generality we can assume 
$$
C_{i_1}=V (f_{i_1})=V(x^2+a_{i_1}(xz+y^2)), \qquad
C_{i_2}=V( f_{i_2})=V(x^2+a_{i_2}(xz+y^2))
$$ 
with $a_{i_1},a_{i_2}\in \CC$. Therefore, $D_0(f_{i_1})_1=D_0(f_{i_2})_1\cong {\rm Syz}(f_{i_1}f_{i_2})_1=\langle (0,x,-2y)\rangle $. By hypothesis ${\rm mrd}(f_{i_1}f_{i_2}f_{i_3})=1$, hence by Theorem \ref{r_union}, we have 
in this case
$$
D_0(f_{i_3})_1\cap \langle (0,x,-2y)) \rangle \ne {0}, \ {\rm or} \ 
\alpha (0,x,-2y)+\beta (x,y,z)\in D_0(f_{i_3})_1.
$$
A direct computation shows that in this case  necessarily $C_{i_3}=
V(f_{i_3})=V(\lambda x^2+\mu (xz+ y^2))$. 

\vskip 2mm It follows from the claim that the irreducible components of $C=\cup _{i=1}^mC_i$ belong either to a bitangent pencil of conics or to a hyperosculating pencil of conics. In the first case we have $\tau(C)=(2m)^2-3(2m)+2$ (see Example \ref{mintau}(1)) and in the second case it is $\tau(C)=(2m)^2-3(2m)+3$ (see Example \ref{tau_max}(2)), which proves what we want. 

\vskip 2mm
\noindent \underline{Case 3:} : Let $ C =\cup _{i=1}^m C_i \bigcup \cup _{j=1}^s L_j:  \ f=\prod_{i=1}^mf_i \cdot \prod _{j=1}^s \ell _j=0$
 be a conic-line arrangement. By Theorem \ref{r_union}, the conic arrangement $\cup _{i=1}^m C_i$ satisfies ${\rm mrd}(\prod _{i=1}^mf_i)=1$. By the above discussion, the components of $\cup _{i=1}^m C_i$ belong either
 to a hyperosculating pencil or to a bitangent pencil of conics. We analyze this two cases separately. 
 
 Let us first assume that $\cup _{i=1}^m C_i=V(  \prod _{i=1}^m (x^2+a_{i}(xz+y^2)))$ belong to a hyperosculating pencil of conics with hyperosculating point $p=(0:0:1)$. Therefore, ${\rm Syz}(\prod _{i=1}^m f_i)_1=\langle (0,x,-2y)\rangle$. We look for a line $L=V( \ell) =V(ax+by+cz)$ such that ${\rm mrd}((ax+by+cz) \prod _{i=1}^m (x^2+a_{i}(xz+y^2))=1$. By Theorem \ref{r_union}, (3), if we set
 $$
 f_1=ax+by+cz, \quad f_2=\prod _{i=1}^m (x^2+a_{i}(xz+y^2)),
 $$
 we have that either $D_0(f_1) _1 \cap D_0(f_2)_1 \neq 0$, or $\alpha (x\partial_x +y \partial_y +z \partial_z) \in D_0(f_1) _1 + D_0(f_2)_1$ for some nonzero constant $\alpha$. As to the line $V(f_1)$, if $a \neq 0$, we have
 $$
 D_0(f_1) _1= \{ L_1 (-b \partial_x +a \partial_y) + L_2 (-c \partial_x + a \partial_z) \ : \ L_1, L_2 \in R_1\},
 $$
 while $D_0 (f_2)= \langle x\partial_y -2y \partial_z \rangle$.
 
 The condition $D_0(f_1) _1 \cap D_0(f_2)_1 \neq 0$ is never satisfied,
 while
 the condition $\alpha (x\partial_x +y \partial_y +z \partial_z) =L_1 (-b \partial_x +a \partial_y) + L_2 (-c \partial_x + a \partial_z) + \beta x \partial_y - 2\beta y \partial_z$ for $\alpha \neq 0$ has as a unique solution $b=c=0$. It follows that
 $$
 \ell =x.
 $$
 The case $a=0$ can be treated similarly and it never occurs.

 Next it is possible to check in a similar way
 that  
 $h=x(ax+by+cz)\prod _{i=1}^m (x^2+a_{i}(xz+y^2))$ has a linear Jacobian syzygy
 if and only if $b=c=0$, but this would give rise to a non reduced polynomial.

 So, $C$ is as in example \ref{tau_max}.

 Let us now assume that $\cup _{i=1}^m C_i=V( \prod _{i=1}^m (xz+a_{i_1}y^2))$ belongs to a pencil of conics, all of them bitangent at $\{p=(1:0:0),q=(0:0:1)\}$, so that we have the linear syzygy $(x,0,-z)$.
 Arguing as above we can determine the lines that we can add to this conic arrangement in such a way that the new conic-line arrangement has Jacobian ideal with a linear syzygy.
  It turns out that we have only six possibilities:
 $$
 x\prod _{i=1}^m (xz+a_{i_1}y^2), \quad y\prod _{i=1}^m (xz+a_{i_1}y^2), \quad z\prod _{i=1}^m (xz+a_{i_1}y^2), 
 $$
 $$
 xz\prod _{i=1}^m (xz+a_{i_1}y^2), \quad yz\prod _{i=1}^m (xz+a_{i_1}y^2), \quad xyz\prod _{i=1}^m (xz+a_{i_1}y^2).
 $$
 The case $y\prod _{i=1}^m (xz+a_{i_1}y^2)=0$ is not free by Theorem \ref{Conca} (2). By observing that the first and the third case are projectively equivalent, this concludes the proof.
\end{proof}

\subsection{Nearly free conic-line arrangements with
a linear Jacobian syzygy}

In this subsection, we classify conic-line arrangements $C=V(f)$ of degree $d$ in $\PP^2$ with 
$r={\rm mrd}(f)=1$ and minimal Tjurina number $\tau (C)=d^2-3d+2$.

\begin{theorem} \label{taumin}
Let $C=V(f)$ be a conic-line arrangement in $\PP^2$ of degree $d\ge 6$. Then, $\tau(C)=d^2-3d+2$ if and only if 
 $C$ is either a conic arrangement ${\mathcal C}_2$ as in example \ref{mintau}(1), or a conic-line arrangement ${\mathcal C}{\mathcal L}_6$ as in example \ref{mintau}(2).
\end{theorem}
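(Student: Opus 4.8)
The plan is to run the same three-case analysis as in the proof of Theorem~\ref{prop: maxtau}, but now to isolate the \emph{nearly free} members of the resulting list. The reverse implication needs nothing new: by Example~\ref{mintau} both ${\mathcal C}_2$ and ${\mathcal C}{\mathcal L}_6$ satisfy ${\rm mrd}(f)=1$ and $\tau(C)=d^2-3d+2$. So I would concentrate on the forward implication, whose only genuinely new ingredient, compared with Theorem~\ref{prop: maxtau}, is the reduction to the case ${\rm mrd}(f)=1$.

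First I would show that $\tau(C)=d^2-3d+2$ and $d\ge 6$ force $r:={\rm mrd}(f)=1$. Writing $\tau(C)=(d-1)(d-2)$ and imposing that this lies in the du Plessis--Wall interval of Proposition~\ref{bounds_tau}, the upper bound $(d-1)(d-2)\le (d-1)(d-r-1)+r^2$ simplifies to $(d-1)(r-1)\le r^2$, i.e.\ $(r-1)d\le r^2+r-1$. For $d\ge 6$ this holds only for $r=1$ or for $r\ge d-2$; in the latter range $2r\ge d$, so I would invoke the finer du Plessis--Wall upper bound valid in that regime (\cite[Theorem~3.2]{PW}), namely $\tau(C)\le (d-1)(d-r-1)+r^2-\binom{2r-d+2}{2}$. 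Substituting $r=d-1$ gives $\tau(C)\le \frac{(d-1)(d-2)}{2}$ and $r=d-2$ gives $\tau(C)\le \frac{d(d-1)}{2}$, both strictly smaller than $d^2-3d+2$ as soon as $d\ge 6$. This contradicts $\tau(C)=d^2-3d+2$ and leaves $r=1$. This is precisely where $d\ge 6$ is needed: at $d=5$ the value $d^2-3d+2$ is the maximal Tjurina number for $r=2$, so extra, non conic-line curves would intervene.

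Once $r=1$, Cases~1--3 in the proof of Theorem~\ref{prop: maxtau} apply verbatim, since they only use ${\rm mrd}(f)=1$ and never the precise value of $\tau$. Concretely: a pure line arrangement with $r=1$ is the near-pencil $\sL$; by the Claim there (Theorem~\ref{r_union}(3) together with \cite[Proposition~5.5]{DIS}) the conic part of $C$ lies in a single bitangent or hyperosculating pencil; and the only lines one may adjoin while preserving $r=1$ are the tangent lines at the base point(s) and the line joining them. This yields the complete finite list $\sL,\ {\mathcal C}_1,\ {\mathcal C}_2,\ {\mathcal C}{\mathcal L}_1,\dots,{\mathcal C}{\mathcal L}_6$ of conic-line arrangements with $r=1$. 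By Example~\ref{tau_max} and the freeness analysis through Buchweitz--Conca (Theorem~\ref{Conca}), the arrangements $\sL,{\mathcal C}_1,{\mathcal C}{\mathcal L}_1,\dots,{\mathcal C}{\mathcal L}_5$ are free and hence have $\tau=d^2-3d+3$, whereas ${\mathcal C}_2$ and ${\mathcal C}{\mathcal L}_6$ are nearly free with $\tau=d^2-3d+2$ by Example~\ref{mintau}. The hypothesis $\tau(C)=d^2-3d+2$ therefore singles out precisely ${\mathcal C}_2$ and ${\mathcal C}{\mathcal L}_6$.

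I expect the reduction to $r=1$ to be the main obstacle. For the maximal value $d^2-3d+3$ treated in Theorem~\ref{prop: maxtau}, equality in the coarse bound already forces freeness, and freeness forces $r=d_1\le (d-1)/2$, which disposes of the large exponents almost formally. Here $d^2-3d+2$ sits strictly inside the du Plessis--Wall interval, the coarse bound alone cannot exclude the sporadic values $r=d-2$ and $r=d-1$, and one is forced to appeal to the sharper estimate in the regime $2r\ge d$. After this reduction the argument is entirely parallel to that of Theorem~\ref{prop: maxtau}, with the roles of \emph{free} and \emph{nearly free} interchanged.
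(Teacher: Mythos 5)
Your proposal is correct, and its overall route coincides with the paper's: reverse implication from Example~\ref{mintau}, forward implication by forcing ${\rm mrd}(f)=1$ via du Plessis--Wall and then re-running the classification from the proof of Theorem~\ref{prop: maxtau}, with the value of $\tau$ (equivalently, free versus nearly free) filtering the resulting list. The one place where you genuinely diverge is the reduction to $r=1$, and there your treatment is more complete than the paper's. The paper disposes of this step in one line, asserting that $\tau(C)=d^2-3d+2$, $d\ge 6$ and Proposition~\ref{bounds_tau} imply ${\rm mrd}(f)=1$; but, as you correctly observe, the coarse interval $(d-1)(d-r-1)\le\tau\le(d-1)(d-r-1)+r^2$ stated there does \emph{not} exclude $r\in\{d-2,d-1\}$ (for instance $d=6$, $r=4$ gives the interval $[5,21]$, which contains $20$), and singular curves with such large $r$ do exist (a curve with a single node has $r=d-1$). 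So the refined du Plessis--Wall estimate $\tau\le(d-1)(d-r-1)+r^2-\binom{2r+2-d}{2}$ in the regime $2r\ge d$, which you invoke and compute correctly (giving $\tfrac{d(d-1)}{2}$ and $\tfrac{(d-1)(d-2)}{2}$, both $<(d-1)(d-2)$ for $d\ge 5$), is actually needed; the paper implicitly relies on the full strength of \cite[Theorem 3.2]{PW} through its citation rather than through the proposition as stated. A second, minor difference: to exclude pure line arrangements you use that an $r=1$ line arrangement is the near-pencil $\sL$, which is free with $\tau=d^2-3d+3$, whereas the paper cites \cite[Proposition 4.3]{DS4} directly; the two arguments are equivalent. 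Your closing remark about $d=5$ (where $d^2-3d+2=12$ equals the maximal Tjurina number for $r=2$) correctly identifies why the hypothesis $d\ge 6$ is not cosmetic.
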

\begin{proof}
 All conic-line arrangements $C\subset  \PP^2$ described in examples \ref{mintau}(1)-(2) have total Tjurina number $\tau(C)=d^2-3d+2$. 
 
 Let us prove the converse. The hypothesis $\tau(C)=d^2-3d+2$, $d\ge 6$ and Proposition \ref{bounds_tau} imply that ${\rm mrd}(f)=1$. By \cite[Proposition 4.3]{DS4}, there are no line arrangements with 
 ${\rm mrd}(f)=1$ and $\tau(C)=d^2-3d+2$. Therefore, we only have two possibilities: either 
 $C$ is a conic arrangement,
or $C$ is a conic-line arrangement.
Arguing as in the proof of Theorem \ref{prop: maxtau} is it possible to conclude.
\end{proof}


\section{Applications}

As application of the previous results we obtain the main result of this paper, namely, we determine when the Jacobian ideal of a conic-line arrangement is the ideal of an eigenscheme. More precisely, we have:

\begin{theorem}\label{main}
Let $C=V(f)$ be a conic-line arrangement in $\PP^2$ of degree $d\ge 4$. The Jacobian ideal $J_f$ of $f$ is the ideal of an eigenscheme $E(T)$ if and only if $C$ is either a line arrangement ${\mathcal L}$,
or a conic-line arrangement ${\mathcal C}{\mathcal L}_2$.
 \end{theorem}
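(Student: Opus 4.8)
The plan is to reduce the statement to the two computational facts already recorded in Example~\ref{tau_max}, by combining the classification of maximal-Tjurina arrangements (Theorem~\ref{prop: maxtau}) with the Hilbert--Burch criterion for eigenschemes of \cite{ASS} recalled in Section~2.

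First I would establish that $J_f = I(E(T))$ forces $C$ to have maximal Tjurina number. Since $C$ is reduced, its Jacobian scheme $\Sigma_f = V(J_f)$ is zero-dimensional; hence $E(T) = \Sigma_f$ is a zero-dimensional, codimension-two standard determinantal scheme, and therefore has the expected length $d^2-3d+3$ (independent of $T$). As $\deg \Sigma_f = \tau(C)$, we obtain $\tau(C) = d^2-3d+3$, so that $C$ is free with exponents $(1,d-2)$ and ${\rm mrd}(f)=1$. For $d \ge 5$, Theorem~\ref{prop: maxtau} confines $C$ to the seven configurations $\mathcal{L}$, $\mathcal{C}_1$, $\mathcal{CL}_1, \dots, \mathcal{CL}_5$; the single low-degree case $d=4$, which lies outside the range of Theorem~\ref{prop: maxtau}, I would settle by directly listing the degree-four conic-line arrangements with $\tau = 7$.

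Next I would apply the eigenscheme criterion: a $0$-dimensional scheme of length $d^2-3d+3$ is an eigenscheme precisely when its Hilbert--Burch matrix can be arranged with a column of three linearly independent linear forms. For a free curve of exponents $(1,d-2)$ the $3\times 2$ Hilbert--Burch matrix has first column the linear syzygy and second column a syzygy of degree $d-2$; since $d-2 \ge 2 > 1$ for $d \ge 4$, no column operation can disturb the linear column (one cannot add the degree-$(d-2)$ column into it), and the linear syzygy is itself unique up to scalar. Hence the criterion is equivalent to the intrinsic condition that the generator of ${\rm Syz}(J_f)_1$ have three linearly independent entries. Reading off the syzygies computed in Example~\ref{tau_max}, this holds for $\mathcal{L}$, with syzygy $(x,y,(1-d)z)$, and for $\mathcal{CL}_2$, with syzygy $((d-1)x,-y,-(d+1)z)$, while it fails for $\mathcal{C}_1$ and $\mathcal{CL}_1$ (syzygy $(0,x,-2y)$) and for $\mathcal{CL}_3,\mathcal{CL}_4,\mathcal{CL}_5$ (syzygy $(x,0,-z)$), each of which has a vanishing entry and so spans only a plane. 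This eliminates all configurations except $\mathcal{L}$ and $\mathcal{CL}_2$. The converse is immediate: the explicit Hilbert--Burch matrix~\eqref{eq: Hilbert Burch L} for $\mathcal{L}$, and the one computed for $\mathcal{CL}_2$ in Example~\ref{tau_max}(4), already display three linearly independent linear forms in their first column, so \cite{ASS} produces a tensor $T$ with $J_f = I(E(T))$.

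The main obstacle is the rigidity step: verifying that the eigenscheme criterion genuinely reduces to linear independence of the coefficients of the degree-one syzygy, rather than to some column-reduced form of the matrix. This rests on the degree gap $d-2 > 1$, which forbids mixing the two syzygies, together with the uniqueness up to scalar of the linear syzygy for a free curve with ${\rm mrd}(f)=1$; once this is secured the classification is a direct inspection of the seven tabulated syzygies. A secondary, more routine, point is the separate treatment of $d=4$, which is excluded from the hypotheses of Theorem~\ref{prop: maxtau}.
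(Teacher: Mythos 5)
Your proposal is correct and follows essentially the same route as the paper's own proof: from $J_f=I(E(T))$ deduce that $\tau(C)=d^2-3d+3$, that $C$ is free with exponents $(1,d-2)$ and ${\rm mrd}(f)=1$, then invoke Theorem~\ref{prop: maxtau} and sift the seven families according to whether the generator of ${\rm Syz}(J_f)_1$ has three linearly independent entries, the forward direction being read off the explicit Hilbert--Burch matrices of Example~\ref{tau_max}(1) and (4). If anything you are slightly more careful than the paper, which neither spells out the rigidity argument showing the linear column of the Hilbert--Burch matrix must be the (unique up to scalar) linear syzygy, nor comments on the case $d=4$, which lies outside the stated hypotheses of Theorem~\ref{prop: maxtau}.
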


\begin{proof}
Let $C=V(f)$ be a line arrangement (resp. conic-line arrangement) as described in the statement of the theorem. We have seen in example \ref{tau_max}(1) (resp. example \ref{tau_max}(4)) that the Jacobian ideal $J_f$ of $C$ has is defined by the maximal minors of the matrix
$$
\begin{pmatrix}(d-1) x & g_0 \\
-y & g_1\\
-(1+d)z & g_2\\
\end{pmatrix}, \quad
\text {resp. } 
\begin{pmatrix} x & h_0 \\
y & h_1\\
(1-d)z & h_2\\
\end{pmatrix}.
$$
Equivalently, the Jacobian ideal is generated by the minors of the matrix
$$
\begin{pmatrix} x & \frac{1}{d-1} g_0 \\
y & -g_1\\
z & -\frac{1}{d+1}g_2\\
\end{pmatrix}, \quad 
\text {resp. } 
\begin{pmatrix} x & h_0 \\
y & h_1\\
z & \frac {1} {d-1}h_2\\
\end{pmatrix}.
$$
By definition we have $J_f=I(E(T))$, where
$T=(\frac{1}{d-1}g_0,-g_1, -\frac{1}{d+1}g_2)\in (Sym^{d-1}\CC^{3})^{\oplus (3)}$,
resp. $T=(h_0,h_1, \frac{1} {d-1} h_2)\in (Sym^{d-1}\CC^{3})^{\oplus (3)}$ are partially symmetric tensors.

Let us prove the converse. Assume that there is a partially symmetric tensor $T=(g_1,g_2, g_3)\in (Sym^{d-1}\CC^{3})^{\oplus (3)}$ such that $I(E(T))=J_f$. This implies that $C$ is free, $\tau(C)=d^2-3d+3$, ${\rm mrd}(f)=1$ and that ${\rm Syz}(J_f)_1$ is generated by three linearly independent linear forms. Example \ref{tau_max} together with Proposition \ref{prop: maxtau} proves what we want.
\end{proof}

\begin{remark}
There are examples of reduced plane curves $C=V(f)\subset \PP^2$ whose Jacobian ideal $J_f$ is the ideal of an eigenscheme $E(T)$ and they are not conic-line arrangements. For instance, $f=y(x^3-y^2z)$.
\end{remark}

\begin{remark}
In particular, the geometry of the Jacobian scheme of a line arrangement of type ${\mathcal L}$ or a conic-line arrangement ${\mathcal C}{\mathcal L}_2$ is completely described by \cite[Theorem 5.5 and Remark 5.8]{BGV}.
We observe that the cited result concerns only reduced eigenschemes, but it is not difficult to see, that it can be extended to all non reduced zero-dimensional eigenschemes.

Specifically, we have that if $k\in\{2,\dots,d-1\}$ then no subscheme of degree $kd$ of $\Sigma_f$ lies on a curve of degree $k$. Moreover,
the class of $S={\rm Bl}_{\Sigma_f} \PP^2$ in the Chow ring $A(\PP^2 \times \PP^2)$ can be determined. By choosing $L_1$ and $L_2$ as generators of the Picard groups of the two factors, and by setting $p_i:\PP^2 \times \PP^2 \to \PP^2$ to be the two projections, we have that the two divisors
$h_1=p_1^\star L_1$ and $h_2=p_2^\star L_2$
are generators for $A(\PP^2 \times \PP^2)$. Then it is simple to check that
the class of $S$ in $A (\PP^2 \times \PP^2)$ is given by
$$
[S]=(d-1)h_1^2
+ dh_1 h_2 + h_2^2,
$$
and, by taking into account the Hilbert-Burch matrices given in
\eqref{eq: Hilbert Burch L} and
\eqref{eq: Hilbert Burch of $CL$}, the surface
$S$ turns out to be the complete intersection of the two divisors
$T\sim h_1+h_1$ and $D \sim (d-2)h_1+h_2$ given by
$$
T=V
(p_0 x +p_1 y +(1-d)p_2 z), \quad 
D=V(
p_0\partial_{yz}f   +p_1\partial_{xz}f )
$$
in case $\sL$, respectively
$$
T=V((d-1)p_0x -p_1 y-(d+1)p_2 z), \quad
D=V((d-1) \ p_0 \partial_{yz}f
+2p_1\partial_{xz}f
-(d+1) p_2\partial_{xy} f),
$$
in case $\sC \sL_2$,
where $((x:y:z),(p_0:p_1:p_2)) \in \PP^2 \times \PP^2$.

Finally, we observe that by \cite{OO} or \cite[Lemma 5.6]{Abo},
every planar eigenscheme is the zero locus of section $s \in H^0({\mathcal T}_{\PP^2}(d-2))$, where ${\mathcal T}_{\PP^2}$ denotes the
tangent bundle of $\PP^2$.
\end{remark}

Next we shall study the polar map associated with 
${\mathcal L}$ and ${\mathcal C}{\mathcal L}_2$ arrangements.
Observe that since we are concerned with curves of maximal total Tjurina number and quasihomogeneous singularities, the degree of the
generically finite polar map is $(d-1)^2-\mu(C) =d-2$.

\begin{remark}
We can apply the argument used in the proof of \cite[Theorem 5.5]{BGV} and we see that the possible contracted curves by the polar map associated with 
line arrangements of type ${\mathcal L}$ or conic-line arrangements ${\mathcal C}{\mathcal L}_2$ are only lines. 

Indeed, we observe that for any $p=(p_0:p_1:p_2)\notin \Sigma_f$, the point $\nabla f(p)$ is the intersection point of the two distinct lines 
$$
\nabla f(p): \ \left\{
\begin{array}{l}
p_0 x +p_1 y +(1-d)p_2 z=0\\
\partial_{yz}f (p) x +\partial_{xz}f (p)y =0.
\end{array}
\right.
$$
As a consequence, the fiber of $\nabla f$ over any point $q=(q_0:q_1:q_2)\in \PP^2$ is given by the zero locus of
$$
 \left\{
\begin{array}{l}
q_0 x +q_1 y +(1-d)q_2 z=0\\
q_0 \partial_{yz}f +q_1\partial_{xz}f =0.
\end{array}
\right.
$$
Since the first equations represents a line for any choice of $q \in \PP^2$, the claim follows.

\end{remark}

We shall see in the next result that the presence of contracted lines is indeed always confirmed 
for ${\mathcal L}$ and ${\mathcal C}{\mathcal L}_2$
arrangements. Recall that the critical locus of the polar map is given by the hessian curve, and it consists of the contracted curves and the ramification points for the polar map.
\begin{proposition}
Let $C=V(f)$ be a conic-line arrangement in $\PP^2$ of degree $d$ such that the Jacobian ideal $J_f$ of $f$ is the ideal of an eigenscheme $E(T)$.

Then, in case $\sL$, the critical locus of $\nabla f$ is given by an arrangement of $3(d-2)$ lines of the same type
of $\sL$, it contains $\sL$ and the contracted lines by $\nabla f$ are precisely the lines of $\sL$.

In case ${\mathcal C}{\mathcal L}_2$, the critical locus
contains the tangent line $\ell$, and it is the only contracted line.

\end{proposition}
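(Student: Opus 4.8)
The plan is to realise the critical locus of $\nabla f$ as the Hessian curve $\hess(f)=V(\det(\hess f))$, a divisor of degree $3(d-2)$, and to detect contracted lines through the fibre description recalled in the previous remark: the fibre of $\nabla f$ over $q=(q_0:q_1:q_2)$ is contained in the intersection of the two loci $L_q$ and $C_q$ obtained by pairing $q$ with the two columns of the relevant Hilbert--Burch matrix. Consequently, if a line $L$ is contracted, say to $q^\ast$, it must coincide with the (linear) locus $L_{q^\ast}$ and be contained in $C_{q^\ast}$; the contractions themselves I verify directly by restricting $\nabla f$ to the candidate lines.

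For $\sL$ I write $f=zg$ with $g=\prod_{i=1}^{d-1}(a_ix+b_iy)$, a binary form in $x,y$. Since $\partial_{zz}f=0$, expanding the Hessian determinant along its last row gives $\det(\hess f)=z\,Q$, where $Q=2g_xg_yg_{xy}-g_x^2g_{yy}-g_y^2g_{xx}$ is again a form in $x,y$ alone, of degree $3d-7$. As every binary form over $\CC$ is a product of linear factors, each cutting out a line through $p=(0:0:1)$, and $V(z)$ is a transversal line, the critical locus is an arrangement of $3(d-2)$ lines of the same type as $\sL$. On the component $\ell_i=V(a_ix+b_iy)$ one has $g\equiv0$ and $(g_x,g_y)\equiv(a_i,b_i)\prod_{j\neq i}\ell_j$, whence $\nabla f=(zg_x:zg_y:g)\equiv(a_i:b_i:0)$; on $V(z)$ one has $\nabla f\equiv(0:0:1)$. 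Thus every component of $\sL$ is contracted and therefore lies on $\hess(f)$, proving $\sL\subseteq\hess(f)$. Conversely, a contracted line $L$ equals $L_{q^\ast}=\{q^\ast_0x+q^\ast_1y+(1-d)q^\ast_2z=0\}$ and lies on the second fibre curve $\{q^\ast_0g_y-q^\ast_1g_x=0\}$; since the latter is a union of lines through $p$, the line $L$ passes through $p$, forcing $q^\ast_2=0$, and substituting the direction $(q^\ast_1:-q^\ast_0)$ of $L$ into $q^\ast_0g_y-q^\ast_1g_x$ gives $-(d-1)\prod_j(a_jq^\ast_1-b_jq^\ast_0)$, which vanishes exactly when $(q^\ast_0:q^\ast_1)=(a_i:b_i)$, i.e.\ $L=\ell_i$. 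Together with the degenerate point $q^\ast=(0:0:1)$, for which the second equation is void and $L_{q^\ast}=V(z)$, this shows that the contracted lines are precisely the components of $\sL$.

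For ${\mathcal C}{\mathcal L}_2$ I write $f=xh$ with $h=H(xz,y^2)$ and $H(u,v)=\prod_{i=1}^m(u+a_iv)$. On $V(x)$ one has $\nabla f=(h:0:0)\equiv(1:0:0)$, so $\ell=V(x)$ is contracted and hence lies in the critical locus. For uniqueness one computes $\partial_{yz}f=2x^2yH_{uv}$, $\partial_{xz}f=x(2H_u+uH_{uu})$ and $\partial_{xy}f=2y(H_v+uH_{uv})$, so that the second fibre curve is $C_q=\{(d-1)q_0\partial_{yz}f+2q_1\partial_{xz}f-(d+1)q_2\partial_{xy}f=0\}$, while $L_q=\{(d-1)q_0x-q_1y-(d+1)q_2z=0\}$. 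A contracted line $L$ must equal $L_{q^\ast}$ and lie on $C_{q^\ast}$. If $q^\ast_2\neq0$, evaluating $C_{q^\ast}$ at the point $L_{q^\ast}\cap V(x)=(0:(d+1)q^\ast_2:-q^\ast_1)$ returns a nonzero multiple of $q^\ast_2\,m\prod_ia_i$, using $H_v(0,y^2)=m(\prod_ia_i)\,y^{2m-2}$, so $L_{q^\ast}\not\subseteq C_{q^\ast}$. If $q^\ast_2=0$ then $C_{q^\ast}=x\cdot B$ with $B=(d-1)q^\ast_0xyH_{uv}+q^\ast_1(2H_u+uH_{uu})$; restricting $B$ to $L_{q^\ast}$ and reading off the coefficient of the highest power of $z$ yields a nonzero multiple of $(q^\ast_1)^m\,m(m+1)$, which is nonzero whenever $q^\ast_1\neq0$. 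Hence the only admissible point is $q^\ast=(1:0:0)$, giving $L=\ell$, so $\ell$ is the unique contracted line.

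The step I expect to be the main obstacle is the uniqueness in the ${\mathcal C}{\mathcal L}_2$ case. Here, in contrast with $\sL$, the second fibre curve $C_q$ is not a binary form, so there is no immediate concurrency constraint on a putative contracted line; ruling out the spurious lines forces one to exploit the precise factored shape of the mixed second derivatives of $f$ and to run two distinct non-vanishing arguments, one evaluating $C_{q^\ast}$ along $V(x)$ when $q^\ast_2\neq0$, the other extracting a leading coefficient along $L_{q^\ast}$ when $q^\ast_2=0$.
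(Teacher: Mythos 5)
Your proof is correct, and its skeleton matches the paper's: factor the Hessian determinant (for $\sL$, expansion along the last row gives $z$ times a binary form of degree $3d-7$), verify the contractions by restricting the explicit polar map to the candidate lines, and rule out further contracted lines via the fibre description coming from the two columns of the Hilbert--Burch matrix. Where you genuinely diverge is in the uniqueness arguments, and in both cases your route is tighter. For $\sL$, the paper, after writing the fibre as $L_q\cap C_q$, digresses into locating the \emph{ramification} via non-reduced members of the pencil $q_0\partial_{yz}f+q_1\partial_{xz}f$ and appeals to genericity of the factors of $f$; you instead close the argument unconditionally with the Euler identity for the binary form $g$, which evaluates the second fibre equation along $L_{q^\ast}$ to $-(d-1)\prod_j(a_jq^\ast_1-b_jq^\ast_0)$ and so pins the contracted lines down to exactly the components of $\sL$ (incidentally, your sign $q^\ast_0 g_y-q^\ast_1 g_x$ is the right one: with it the $\ell_i$ do lie in their own fibres, whereas the paper's $q_0\partial_{yz}f+q_1\partial_{xz}f$ carries a sign slip). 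For $\mathcal{CL}_2$, the paper argues that a contracted line must contain a subscheme of degree at least $d-1$ of the Jacobian scheme, which leaves only the candidates $V(z)$ and $V(y)$, and then checks these fail; you instead transplant the fibre description to the $\mathcal{CL}_2$ Hilbert--Burch matrix \eqref{eq: Hilbert Burch of $CL$} and run two explicit non-vanishing computations (evaluation at $L_{q^\ast}\cap V(x)$ when $q^\ast_2\neq 0$, leading $z$-coefficient when $q^\ast_2=0$). The paper's criterion is shorter but borrows from the eigenscheme theory of \cite{BGV}; yours is self-contained, handles all parameter values including degenerate ones (e.g.\ when $H_{uv}\equiv 0$), and is verifiably complete. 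Only cosmetic gaps remain: you should note that $g_x,g_y$ are linearly independent (immediate since $g$ has at least two distinct linear factors), so that $(0:0:1)$ is the only $q^\ast$ with void second equation, and be aware that the $3(d-2)$ Hessian lines are counted with multiplicity.
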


\begin{proof}
It is classically known that all the lines of a line arrangement are contained in the hessian curve. 

Now we verify that the residual curve to ${\mathcal L}$ in $\text {Hess}(f)$ consists of $3(d-2)-d=2d-6$ concurrent lines through $O=(0:0:1)$ and that such residual lines are not contracted by $\nabla f$.

The first claim follows by writing the hessian matrix explicitly:
$$
\hess(f)=\begin{pmatrix} 
\partial_{xx}f & \partial_{xy}f & \partial_{xz} f \\
\partial_{xy}f & \partial_{yy}f & \partial_{yz} f \\
\partial_{xz}f & \partial_{yz}f & 0 \\
\end{pmatrix}.
$$
Since both $\partial_{xz}f$ and $\partial_{yz}f$ are polynomials in $x$ and $y$ only, by developing the determinant $h(f)=\det \hess(f)$ with respect to the last row we see that $\frac{h(f) }{ f}$ is a polynomial in $x$ and $y$.

Moreover, as the polar map is given by
$$
\nabla f= \left( x_2\left( \sum_{i=1}^{d-1} a_i 
\prod_{j\neq i, j=1}^{d-1}(a_j x + b_j y)\right),z\left( \sum_{i=1}^{d-1} b_i 
\prod_{j\neq i, j=1}^{d-1}(a_j x + b_j y)\right),\prod_{i=1}^{d-1}(a_i x + b_i y)
\right),
$$
we see that the line $z=0$ is contracted to the point $(0:0:1)$ and the lines $a_i x +b_i y=0$
to the points $(a_i:b_i:0)$. 

Finally, to prove that there are no other contracted lines, we
recall that the Hilbert-Burch matrix of $J_f$ is given by
\eqref{eq: Hilbert Burch}, and that $\nabla f$ is given by its $2\times 2$ minors. It follows that for any $p=(p_0:p_1:p_2)\notin \Sigma_f$, the point $\nabla f(p)$ is the intersection point of the two distinct lines 
$$
\nabla f(p): \ \left\{
\begin{array}{l}
p_0 x +p_1 y +(1-d)p_2 z=0\\
\partial_{yz}f (p) x +\partial_{xz}f (p)y =0.
\end{array}
\right.
$$
As a consequence, the fiber of $\nabla f$ over a point $q=(q_0:q_1:q_2)\in \PP^2$ is given by the zero locus of
$$
 \left\{
\begin{array}{l}
q_0 x +q_1 y +(1-d)q_2 z=0\\
q_0 \partial_{yz}f +q_1\partial_{xz}f=0.
\end{array}
\right.
$$
In particular, a ramification point appears in a fiber if and only if the set of $d-2$ concurrent lines through $(0:0:1)$ given by the equation $q_0 \partial_{yz}f +q_1\partial_{xz}f =0$ contains a (non reduced) double line, so the question is to determine the non reduced elements of the pencil $q_0 \partial_{yz}f +q_1\partial_{xz}f$. But the latter can be seen as a pencil of divisors in $\PP^1$, and precisely the Jacobian pencil of the polynomial $\partial_2 f$. If the factors of $f$ are general, the polynomial $\partial_2 f \in \CC[x,y]_{d-1}$ is general too. Therefore, the ramification points of the polar map $\nabla \partial_2 f$ are given by its hessian.

We finally treat the case of an ${\mathcal C}{\mathcal L}_2$ arrangement. It is well known that any linear component of a plane curve is contained in the Hessian curve. Moreover, if $f=x\prod _{i=1}^m(xz + a_iy^2)$, the polar map is given by
$$
\nabla f= \left( \prod _{i=1}^m q_i + xz \left(  \sum _{j=1}^m \prod _{i=1, i\neq j}^m  q_i\right), 2xy  \left(  \sum _{j=1}^m a_j\prod _{i=1, i\neq j}^m  q_i\right),    
 x^2 \left(  \sum _{j=1}^m \prod _{i=1, i\neq j}^m  q_i\right)\right),
 $$
 where we set $q_i=xz + a_iy^2$;
we see that the line $V(x)$ is contracted to a point. To see that there are no other contracted lines, we observe that such a line should contain a subscheme of degree at least $d-1$ in
 the Jacobian scheme; the only possible candidates are the tangent line in the second osculating point of the conics, that is the line $V(z)$, or the line $V(y)$ connecting the two singular points; but we can directly check that these cases do not occur.
 
\end{proof}

We conclude by observing that the geometry of the polar map seems to encode some information concerning the topological type of the singularities of a given curve, so we believe that it deserves further investigations.

\end{document}